\documentclass{amsart}
\usepackage{amssymb,graphpap}
\usepackage{enumitem}
\newtheorem{theorem}{Theorem}[section]
\newtheorem{proposition}[theorem]{Proposition}

\theoremstyle{definition}
\newtheorem{definition}[theorem]{Definition}
\newtheorem{notation}[theorem]{Notation}

\theoremstyle{remark}
\newtheorem{remark}[theorem]{Remark}

\def\la{\langle}
\def\ra{\rangle}
\def\id{\text{{id}}}

\def\cA{\mathcal{A}}

\def\cP{\mathcal{P}}
\def\cB{\mathcal{B}}

\def\CC{{\mathbb{C}}}
\def\NN{{\mathbb{N}}}

\def\ff{\varphi}

\def\ii{{\bf i}}
\def\jj{{\bf j}}
\def\kk{\kappa}
\def\tail{\text{tail}}
\def\cBX{\cB\la X\ra}
\def\cAt{\cA_\tail}
\def\cAtX{\cAt\la X\ra}
\begin{document}
\date{July 3, 2008}
\sloppy

\title[Noncommutative de Finetti Theorem]{A Noncommutative de Finetti Theorem:\\
Invariance under Quantum Permutations\\ is Equivalent to\\ Freeness with Amalgamation}
\author[C. K\"ostler]{Claus K\"ostler}
\address{University of Illinois at Urbana-Champaign, Department of Mathematics, Altgeld
Hall, 1409 West Green Street, Urbana, 61801, USA} \email{koestler@uiuc.edu}

\author[R. Speicher]{Roland Speicher $^{(\dagger)}$}
\thanks{$^\dagger$
Research supported by Discovery and LSI grants from NSERC (Canada) and by a Killam
Fellowship from the Canada Council for the Arts}
\address{Queen's University, Department of Mathematics and Statistics,
Jeffery Hall, Kingston, ON K7L 3N6, Canada}
\email{speicher@mast.queensu.ca}
\subjclass[2000]{46L54 (46L65, 46L53, 60G09)}
\keywords{Free probability, quantum exchangeability, quantum permutation group}

\begin{abstract}
We show that the classical de Finetti theorem has a canonical noncommutative counterpart
if we strengthen ``exchangeability'' (i.e., invariance of the joint distribution of the
random variables under the action of the permutation group) to invariance under the
action of the quantum permutation group. More precisely, for an infinite sequence of
noncommutative random variables $(x_i)_{i\in\NN}$, we prove that invariance of the joint
distribution of the $x_i$'s under quantum permutations is equivalent to the fact that the
$x_i$'s are identically distributed and free with respect to the conditional expectation
onto the tail algebra of the $x_i$'s.
\end{abstract}

\maketitle

\section{Introduction}
The de Finetti theorem states that an infinite family of random variables whose
distribution is invariant under finite permutations (such a family is called
\emph{exchangeable}) is independent and identically distributed with respect to the
conditional expectation onto the tail algebra of the random variables. Since the
implication in the other direction is fairly elementary one has the equivalence between
exchangeability and conditional independence. See, e.g., \cite{Kallenberg} for an
exposition on the classical de Finetti theorem.

In a noncommutative context classical random variables are replaced by, typically
noncommuting, operators on Hilbert spaces. The expectation with respect to a probability
measure is then replaced by a state on the algebra generated by these operators. The
notion of exchangeability makes of course also sense in such a context, as invariance of
mixed moments under permutations of the random variables, and one can ask what
exchangeability implies in such a more general context. It turns out that in the
noncommutative world there are actually many quite different possibilities for
exchangeable random variables. It was shown in \cite{Claus:factorization} that they all
possess some kind of factorization property; but, as one sees from the variety of
examples, one cannot expect that exchangeability implies some fixed kind of independence.
Indeed, both independence and freeness provide basic examples for exchangeable random
variables. (See also \cite{Lehner, Claus:Lehner} for more on this.)

However, if one moves into the noncommutative realm, one should also take into account
that invariance under permutations is a commutative concept and should be replaced by its
noncommutative analogue. To provide such noncommutative analogues of actions of groups
was one of the motivations for the creation of the theory of quantum groups, which has
been developed very extensively within the last 20 years or so. In particular, Wang
introduced in \cite{Wang} the noncommutative analogue of the permutation group $S_n$,
namely the \emph{quantum permutation group $A_s(n)$}. So if one considers noncommuting
random variables, it is natural to replace the requirement of invariance under
permutations by the stronger requirement of invariance under quantum permutations.
Classical (commuting) independent random variables do not satisfy this stronger form of
exchangeability any more and, as we will show in our main theorem, this noncommutative
version of exchangeability singles out again a very special situation - namely freeness
with amalgamation. In the same way as classical exchangeability is equivalent to
conditional independence, quantum exchangeability is equivalent to freeness with
amalgamation.

Thus our noncommutative de Finetti theorem is another instance of the general philosophy
that freeness plays in the noncommutative world the same role as independence plays in
the commutative world. Note that freeness is not a hidden assumption in our de Finetti
theorem, but it is a consequence of replacing the commutative permutation group by its
noncommutative counterpart.

Here is the statement of our noncommutative de Finetti theorem. All relevant notions will
be defined in Sections 2 and 4.

\begin{theorem}\label{thm:deFinetti}
Let $(\cA,\ff)$ be a $W^*$-probability space and consider an infinite sequence
$(x_i)_{i\in\NN}$ in $\cA$. Then the following two statements are equivalent:
\begin{enumerate}[label=\textnormal{(\alph*)}]
\item \label{item:deFinetti-a}
The joint distribution of $(x_i)_{i\in\NN}$ with respect to $\ff$ is invariant under
quantum permutations.
\item \label{item:deFinetti-b}
The sequence $(x_i)_{i\in\NN}$ is identically distributed and free with respect to the
conditional expectation $E$ onto the tail algebra of the $(x_i)_{i\in\NN}$.
\end{enumerate}

\end{theorem}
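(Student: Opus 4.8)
The plan is to prove the two implications separately, organizing everything around the characterization of freeness with amalgamation through the vanishing of mixed operator-valued cumulants, and exploiting the defining relations of Wang's magic unitary $u=(u_{ij})$: the entries are projections with $\sum_i u_{ij}=\sum_j u_{ij}=1$, whence the row/column orthogonality relations $u_{ij}u_{kj}=\delta_{ik}u_{ij}$ and $u_{ij}u_{ik}=\delta_{jk}u_{ij}$. I read quantum invariance concretely as the family of identities
\[
\sum_{i_1,\dots,i_k}\ff(x_{i_1}\cdots x_{i_k})\,u_{i_1j_1}\cdots u_{i_kj_k}=\ff(x_{j_1}\cdots x_{j_k})\,\eins
\]
obtained by applying $(\ff\otimes\id)$ to the coaction $x_j\mapsto\sum_i x_i\otimes u_{ij}$, for all $k$ and all $j_1,\dots,j_k$.

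For the direction \ref{item:deFinetti-b} $\Rightarrow$ \ref{item:deFinetti-a} I would argue directly from the operator-valued moment–cumulant formula. Since the sequence is identically distributed and free over $\cAt$, every mixed $\ff$-moment is a sum over non-crossing partitions whose blocks each carry a single index, of $\ff$ applied to a nested product of $E$-cumulants; identical distribution makes the contribution of each such partition independent of which index labels a given block. Substituting this expansion into the coaction and collapsing the resulting products of magic-unitary entries through the orthogonality relations, the terms that survive are exactly those compatible with the non-crossing block structure, and a bookkeeping argument then matches the two sides of the invariance identity. I expect this to be the more tractable direction: lengthy but essentially a verification once the combinatorial dictionary is in place.

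The substance lies in \ref{item:deFinetti-a} $\Rightarrow$ \ref{item:deFinetti-b}, which I would carry out in three stages. First, quantum invariance specializes to ordinary exchangeability, because the abelianization of $A_s(n)$ is the function algebra of $S_n$; hence the $x_i$ are identically distributed and the permutation action is $\ff$-preserving. Second, I would produce the conditional expectation $E$ onto the tail algebra $\cAt=\bigcap_n\vN(x_k:k\ge n)$ via a noncommutative mean ergodic (reversed martingale) argument: the averages of the implementing permutation unitaries converge strongly to a $\ff$-preserving conditional expectation onto the invariant algebra, which one identifies with $\cAt$ through a noncommutative Hewitt–Savage type statement. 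Third—the crux—I would show that quantum invariance forces every mixed $E$-cumulant to vanish, which by the operator-valued cumulant characterization of freeness is precisely freeness with amalgamation over $\cAt$.

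The hard part is this last step: converting the magic-unitary relations into cumulant vanishing. My plan is to apply the invariance identity to a mixed moment with a prescribed index pattern and use $u_{ij}u_{kj}=\delta_{ik}u_{ij}$ together with the projection property to annihilate every term except those whose index pattern is non-crossing and constant on blocks, thereby obtaining a factorization at the level of $E$-moments; Möbius inversion on the non-crossing partition lattice should then transport this factorization to the statement that any cumulant containing two distinct indices, or supporting a crossing, vanishes. The delicate points I anticipate are: (i) controlling the interplay between $\cAt$ and the magic-unitary entries, since the cumulants are $\cAt$-valued rather than scalar, so one must insert tail-algebra elements and track how they pass through the coaction; (ii) verifying that the combinatorial collapse of the $u$-products isolates exactly the non-crossing, single-index contributions and nothing extraneous; and (iii) the analytic care required to pass from finite $n$ to the infinite sequence and to justify the ergodic averaging in the $W^*$-setting. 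I expect (ii) to be the genuine combinatorial heart of the argument and (i) the principal conceptual subtlety.
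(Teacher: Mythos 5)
Your sketch of \ref{item:deFinetti-b}$\Rightarrow$\ref{item:deFinetti-a} and the first two stages of your converse (classical exchangeability by abelianization, construction of the $\ff$-preserving conditional expectation $E$ onto $\cAt$ by an ergodic/shift argument) follow essentially the paper's route and are fine as outlines. The genuine gap is in your third stage, and it is a gap of mechanism, not of bookkeeping. You expect the relations $u_{ij}u_{ik}=\delta_{jk}u_{ij}$, $u_{ij}u_{kj}=\delta_{ik}u_{ij}$ to ``annihilate every term except those whose index pattern is non-crossing and constant on blocks.'' They do no such thing: row orthogonality kills only those tuples $\jj$ in which two \emph{consecutive} indices coincide while the corresponding $i$-indices differ; every $\jj$ with $j(1)\neq j(2)\neq\dots\neq j(n)$ survives, crossing or not, and no relation of $A_s(k)$ makes the monomials $u_{i(1)j(1)}\cdots u_{i(n)j(n)}$ attached to crossing patterns vanish. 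Worse, any collapse derived purely from the defining relations would remain valid in the commutative quotient $C(S_k)$ of $A_s(k)$, where (as the paper notes in Section \ref{section:invariance}) the relevant coefficient sums are all equal to $1$, so that there the invariance identity carries no information beyond classical exchangeability --- and classical exchangeability does not imply freeness. Hence no argument of the shape you describe can close; the quantum hypothesis must be exploited through a \emph{concrete non-commutative representation}, not through the relations in the abstract.

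The paper's actual mechanism is the opposite of annihilation. One first does a downward induction on the number of blocks of $\ker\ii$ (absent from your plan, and needed to discard the $\jj$'s whose kernel has more blocks), after which the invariance of $E$-moments reduces to
$$E[p_1(x_{i(1)})\cdots p_n(x_{i(n)})]=\Bigl(\sum_{\substack{j(1),\dots,j(n)\\ \ker\jj=\ker\ii}} u_{i(1)j(1)}\cdots u_{i(n)j(n)}\Bigr)\cdot E[p_1(x_{i(1)})\cdots p_n(x_{i(n)})],$$
and the crux is to exhibit one magic unitary for which the coefficient in parentheses is \emph{not} $1$, forcing the moment to vanish. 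This is done with the block-diagonal matrix built from $2\times 2$ blocks $\begin{pmatrix} q & 1-q\\ 1-q & q\end{pmatrix}$: taking non-commuting projections $p,q$ for two crossing blocks of $\ker\ii$ and $1$ for the rest, the coefficient becomes $(pq)^s+(p(1-q))^s+((1-p)q)^s+((1-p)(1-q))^s$ (or a variant ending in $p$, $1-p$), which equals $1$ only when $p$ and $q$ commute. This choice is the one place where genuinely quantum permutations enter, and without it your M\"obius-inversion step has nothing to invert. Two further points: the case of non-crossing $\ker\ii$ is not reached by this argument at all --- there, consecutive-distinctness forces a singleton block, and one must invoke the mean-ergodic factorization property $E[\cdots p_l(x_{i(l)})\cdots]=E[\cdots E[p_l(x_{i(l)})]\cdots]$ directly; and your worry (i) about $\cAt$-valued coefficients is resolved in the paper (Proposition \ref{prop:invarianceE}) by approximating tail elements by polynomials in $x_r$ with $r>k$ and extending $u$ by an identity block --- you flag this issue but supply no mechanism, and without it the invariance hypothesis, which is stated only relative to $\ff$, cannot even be applied to $E$-moments with inserted coefficients.
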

To be precise, $E$ denotes the $\ff_\infty$-preserving conditional expectation from
$\cA_\infty$ onto the tail algebra $\cA_\tail$ of $(x_i)_{i \in \NN}$, where $\cA_\infty$
is the von Neumann subalgebra generated by $(x_i)_{i\in\NN}$ and $\ff_\infty$ is the
restriction of $\ff$ to $\cA_\infty$. We want to point out that, if $\ff$ is a trace,
then $E$ can be chosen to be the $\ff$-preserving conditional expectation from $\cA$ onto
$\cA_\tail$. More care is needed in the general case of a non-tracial state. Here we need
to ensure the existence of the conditional expectation $E$. As we will show in Section 4,
this can always be achieved for exchangeable random variables, after restriction to the
W*-probability space $(\cA_\infty,\ff_\infty)$.

 Our paper is organized as follows. In the next section we collect the preliminaries.
On one side, we present the definition of the quantum permutation group and the notion of
invariance under quantum permutations. On the other side, we recall the basic definitions and
relevant results about free independence with amalgamation. In Section 3, we will prove the
``easy'' implication of our de Finetti theorem, namely that freeness with amalgamation implies
invariance under quantum permutations. This is actually not as elementary as in the classical
case (where it follows directly from the fact that independence is a rule for expressing mixed
moments in terms of moments of the single random variables) and we will have to use some of the
basic theory of freeness for this proof. In Section 4, we will define the tail algebra of our
sequence of random variables, and show some basic properties of the corresponding conditional
expectation. Section 5 will finally give the proof of the other implication of our de Finetti
theorem, Theorem \ref{thm:deFinetti}. The paper closes with an example which shows that, as in
the classical case, one needs infinitely many random variables in our de Finetti theorem:
quantum exchangeability of \emph{finitely} many random variables does not necessarily imply
freeness with amalgamation.

\section{Preliminaries}

\subsection{Noncommutative probability spaces and distributions of random variables}
Here we recall the basic notions of non-commutative probability spaces and distributions of
random variables; this is just to have a convenient language for our main statements.
\begin{definition}
1) A \emph{noncommutative probability space} $(\cA,\ff)$ consists of a unital algebra
$\cA$ and a unital linear functional $\ff$.\\
2) A \emph{$W^*$-probability space} $(\cA,\ff)$ is a von Neumann algebra $\cA$ together with a
faithful normal state $\ff$ on $\cA$.
\end{definition}

Note that for a $W^*$-probability space we do not require that our state $\ff$ is a trace.

\begin{definition}
Let $(\cA,\ff)$ be a non-commutative probability space and $(x_i)_{\in\in\NN}$ a sequence
in $\cA$. The \emph{joint distribution of $(x_i)_{i\in\NN}$} is given by the collection
of all moments $\ff(x_{i(1)}\cdots x_{i(n)})$ for all $n\in\NN$ and all
$i(1),\dots,i(n)\in\NN$.
\end{definition}

\subsection{Quantum Permutation Group}
Wang introduced in \cite{Wang} the following noncommutative version of the permutation
group $S_n$.

\begin{definition}\label{def:quantumpermutation}
The \emph{quantum permutation group} $A_s(n)$ is defined as the universal unital
$C^*$-algebra generated by elements $u_{ij}$ ($i,j=1,\dots,n$) such that we have
\begin{itemize}
\item
each $u_{ij}$ is an orthogonal projection: $u_{ij}^*=u_{ij}=u_{ij}^2$ for all
$i,j=1,\dots,n$
\item
the elements in each row and column of $u=(u_{ij})_{i,j=1}^n$ form a partition of unity,
i.e., are orthogonal and sum up to 1: for each $i=1,\dots,n$ and $k\not=l$ we have
$$u_{ik}u_{il}=0\qquad\text{and} \qquad u_{ki}u_{li}=0;$$
and for each $i=1,\dots,n$ we have
$$\sum_{k=1}^n u_{ik}=1=\sum_{k=1}^n u_{ki}.$$
\end{itemize}
\end{definition}

Note that the above requirements imply in particular that the matrix
$u=(u_{ij})_{i,j=1}^n$ is orthogonal, i.e., for each $i,j=1,\dots, n$ we have
$$\sum_{k=1}^n u_{ik}u_{jk}=\delta_{ij}1\qquad\text{and}\qquad
\sum_{k=1}^n u_{ki}u_{kj}=\delta_{ij}1.
$$

$A_s(n)$ is a compact quantum group in the sense of Woronowicz \cite{Wor}. That this is
the right noncommutative version of the permutation group can be seen from the fact that
adding commutativity of the $u_{ij}$ to the above definition yields the group algebra of
the permutation group and that, by a theorem of Wang \cite{Wang}, $A_s(n)$ is the biggest
Hopf algebra coacting on a space of $n$ points. For more information on $A_s(n)$, see
\cite{BC,BBC}.

For $n=1,2,3$ the quantum permutation group is the same as the usual permutation group,
i.e., in these cases $A_s(n)$ is isomorphic as a Hopf algebra to  $\CC S_n$. For $n\geq
4$, however, the quantum version is strictly larger than the classical one; this can be
seen, for example, by finding representations of the $u_{ij}$ which do not commute. Here
is such a representation in the case $n=4$:
$$u=\begin{pmatrix}
q_1&1-q_1&0&0\\
1-q_1&q_1&0&0\\
0&0&q_2&1-q_2\\
0&0&1-q_2&q_2
\end{pmatrix},$$
where $q_1$ and $q_2$ are arbitrary projections. If we take them non-commuting, then the
$C^*$-algebra generated by $q_1$ and $q_2$, which is a quotient of $A_s(4)$, is infinite
dimensional.

\begin{definition}
Consider a noncommutative probability space $(\cA,\ff)$ and a sequence of random
variables $(x_i)_{i\in\NN}$ in $\cA$. We say that the joint distribution (with respect to
$\ff$) of this sequence is \emph{invariant under quantum permutations} or that the
sequence is \emph{quantum exchangeable} if, for any $k\in\NN$, the natural action of
$A_s(k)$ on the $k$-tuple $(x_1,\dots,x_k)$, given by
$$x_i\mapsto \tilde x_i:=\sum_{j=1}^k u_{ij}\otimes x_j \in A_s(k)\otimes \cA,$$
does not change the distribution, i.e., the joint distribution of the $k$-tuple
$(x_1,\dots,x_k)$ with respect to $\ff$ is the same as the joint distribution of the $k$-tuple
$(\tilde x_1,\dots,\tilde x_k)$ with respect to $\id\otimes \ff$.

More explicitly, this means: for all $k,n\in\NN$ and all $1\leq i(1),\dots,i(n)\leq k$ we
have
\begin{equation}\label{eq:invariance}
\ff(x_{i(1)}\cdots x_{i(n)})=\sum_{j(1),\dots,j(n)=1}^k u_{i(1)j(1)}\cdots
u_{i(n)j(n)}\cdot \ff(x_{j(1)}\cdots x_{j(n)})
\end{equation}
as an equality in $A_s(k)$.
\end{definition}

To say it in other words, invariance under quantum permutations asks for the validity of
\eqref{eq:invariance} for any matrix $u=(u_{i,j})_{i,j=1}^k$ whose entries are bounded
operators on some Hilbert space and satisfy the defining relations of $A_s(k)$ from
Definition \ref{def:quantumpermutation}. Note that we do not apply a state on the
elements from $A_s(k)$ to get equality in \eqref{eq:invariance}, but ask for an algebraic
identity in $A_s(k)$.

For a permutation $\sigma\in S_k$ the permutation matrix $(e_{ij})_{i,j=1}^k$ with
$e_{ij}=\delta_{\sigma(i)j}$ provides an example of such a $u$, in this case
\eqref{eq:invariance} gives
\begin{align*}
\ff(x_{i(1)}\cdots x_{i(n)})&=\sum_{j(1),\dots,j(n)=1}^k \delta_{\sigma(i(1))j(1)}\cdots
\delta_{\sigma(i(n))j(n)} \ff(x_{j(1)}\cdots x_{j(n)})\\&=\ff(x_{\sigma(i(1))}\cdots
x_{\sigma(i(n))}),
\end{align*}
which is just the invariance of the distribution of $(x_i)_{i\in\NN}$ under the
permutation $\sigma$. Thus invariance under quantum permutations includes in particular
invariance under permutations; quantum exchangeable random variables are in particular
exchangeable.

\subsection{Freeness with Amalgamation}

Here we collect the basic definitions and needed facts about freeness. For general
introductions on free probability theory, see \cite{VDN,NS,HP}. In the classical de
Finetti theorem we do not get ordinary independence of the random variables, but have to
condition this over the tail algebra. In the same spirit, in our noncommutative de
Finetti theorem, we cannot hope for ordinary freeness with respect to the state $\ff$,
but must expect that we have to condition this with respect to the tail algebra of the
random variables. Voiculescu introduced such a conditional version of freeness (called
operator-valued freeness or freeness with amalgamation) from the very beginning and
developed its basic theory in \cite{Voi:operator}. In \cite{Speicher:Memoir} this concept
was treated from the combinatorial point of view and it was shown that the theory of free
cumulants extends to the operator-valued frame. As our proof of the ``easy'' direction of
theorem \eqref{thm:deFinetti} relies on free cumulants, we will below recall the relevant
facts about operator-valued free cumulants.

Let us first give the definition of an operator-valued probability space and freeness.
This will be done in a general, algebraic context, as one implication of our de Finetti
theorem does only require such general structure.

Recall that a conditional expectation $E:\cA\to\cB$ (for unital algebras $\cB\subset\cA$)
is a linear map which satisfies $E[b]=b$ for all $b\in\cB$ and the bimodule property
\begin{equation*}
E[b_1 a b_2]=b_1 E[a] b_2\qquad \text{for all $b_1,b_2\in\cB$ and for all $a\in\cA$.}
\end{equation*}

\begin{definition}
1) An \emph{operator-valued probability space} $(\cA, E:\cA\to\cB)$ consists of a unital
algebra $\cA$, a unital subalgebra $\cB\subset \cA$ and a conditional expectation
$E:\cA\to\cB$. Elements in $\cA$ are called \emph{(operator-valued) random variables}.

2) For a unital algebra $\cB$ we denote by $\cB\la X\ra$ the \emph{$\cB$-valued polynomials} in
the formal variable $X$; these are linear combinations of elements of the form $b_0Xb_1X\cdots
b_{n-1}Xb_{n}$ for all $n=0,1,2,\dots$ and all $b_0,\dots,b_n\in\cB$. (For $n=0$, this is just
$b_0$.) Elements from $\cB$ do not commute with $X$ (with the exception of $1\cdot X=X=X\cdot
1$). For $p\in\cB\la X\ra$ and $a\in\cA$ (for some algebra $\cA$ which contains $\cB$ as a
subalgebra) we denote by $p(a)\in\cA$ the element which one gets by replacing the variable $X$
by $a$.

3) Let $(\cA,E:\cA\to\cB)$ be an operator-valued probability space and $(x_i)_{i\in\NN}$ a
sequence of random variables in $\cA$. We say that the sequence is \emph{identically
distributed} (with respect to $E$) if for each $p\in \cB\la X\ra$ the expression $E[p(x_i)]$
does not depend on $i\in\NN$.
\end{definition}

In the case of an ordinary noncommutative probability space, i.e., $\cB=\CC$ and $E=\ff$,
the $b_i$ in the definition of $\cB\la X\ra=\CC\la X\ra$ are superfluous and $\CC\la
X\ra$ are just ordinary polynomials; in this case ``identically distributed'' just means
that for each $n\in\NN$ the ordinary moment $\ff(x_i^n)$ does not depend on $i$.

\begin{definition}
Let $(\cA,E:\cA\to\cB)$ be an operator-valued probability space and $I$ an arbitrary
index set. Random variables $(a_i)_{i\in I}$ are called \emph{free with respect to $E$}
(or \emph{free with amalgamation over $\cB$}) if we have for all $n\in\NN$, all
$i(1),\dots,i(n)\in I$ with $i(1)\not=i(2)\not=\dots\not= i(n)$ and all $\cB$-valued
polynomials $p_1,\dots,p_n\in \cBX$ with $E[p_m(a_{i(m)})]=0$ ($m=1,\dots,n$) that also
$$E[p_1(a_{i(1)})\cdots p_n(a_{i(n)})]=0.$$
\end{definition}

The special case where $\cB$ is $\CC$ (and thus $E$ a unital linear functional
$\ff:\cA\to\CC$) gives just the usual definition of freeness.

\subsection{Operator-valued free cumulants}

The combinatorial theory of operator-valued freeness \cite{Speicher:Memoir} relies on the
notions of non-crossing partitions and free cumulants. We will now recall these notions.

\begin{definition}
1) A \emph{partition} $\pi$ of a set $S$ is a decomposition $\pi=\{V_1,\dots,V_r\}$ of
$S$ into disjoint, non-empty subsets $V_i$. The elements $V_i$ are called the
\emph{blocks} of $\pi$. We denote the partitions of $S$ by $\cP(S)$. In the case
$S=\{1,\dots,n\}$, we just write $\cP(n)$.

2) For $\pi,\sigma\in\cP(n)$ we say that $\pi\leq\sigma$ if each block of $\pi$ is
contained in a block of $\sigma$.

2) Let $S$ be an ordered set. A partition $\pi\in\cP(S)$ is called \emph{non-crossing} if
there do not exist two different blocks $V,W$ of $\pi$ such that we have
$s_1<t_1<s_2<t_2$ and $s_1,s_2\in V$ and $t_1,t_2\in W$. The set of non-crossing
partitions of $S$ is denoted by $NC(S)$, or just $NC(n)$ in the case of
$S=\{1,\dots,n\}$.
\end{definition}

If one draws partitions by connecting elements belonging to the same block by
half-circles below the numbers $1,\dots,n$, then the partition is non-crossing if and
only if one does not get crossings between different blocks in such a drawing. Another
characterization of a non-crossing partition is the following recursive description:
$\pi\in\cP(S)$ is non-crossing if at least one of the blocks of $\pi$, say $V$, is an
interval (i.e., consists of consecutive numbers) and if $\pi\backslash V$ is a
non-crossing partition of $S\backslash V$.

\begin{definition}
Let $(\cA,E:\cA\to\cB)$ be an operator-valued probability space.

1) A map $\rho:\cA^n\to\cB$ (for $n\in\NN$) is called a \emph{$\cB$-functional} if it is
$n$-linear and if we have for all $b_0,\dots,b_n\in\cB$ and all $a_1,\dots,a_n\in\cA$
that
$$\rho(b_0a_1b_1,a_2b_2,\dots,a_{n-1}b_{n-1},a_nb_n)=b_0\rho(a_1,b_1a_2,\dots,
b_{n-2}a_{n-1},b_{n-1}a_n)b_n.$$

2) Let, for each $k\in\NN$, a $\cB$-functional $\rho_k:\cA^k\to\cB$ be given. Then, for
$n\in\NN$ and $\pi\in NC(n)$ we define a $\cB$-functional $\rho_\pi:\cA^n\to\cB$
recursively as follows. If $\pi$ is the maximal element $1_n\in NC(n)$, which has only
one block, then we put for all $a_1,\dots,a_n\in\cA$
$$\rho_{1_n}[a_1,\dots,a_n]=\rho_n(a_1,\dots,a_n).$$
Otherwise, let $V=(i+1,\dots,i+r)$ be an interval of $\pi$. Then, for
$a_1,\dots,a_n\in\cA$,
$$\rho_\pi[a_1,\dots,a_n]=\rho_{\pi\backslash
V}[a_1,\dots,a_{i-1},a_i\cdot \rho_r(a_{i+1},\dots,a_{i+r}),a_{i+r+1},\dots,a_n]$$
\end{definition}

As illustration of this definition consider
$$\pi=\bigl\{\{1,10\},\{2,5,9\},\{ 3,4 \} , \{ 6 \} , \{ 7,8 \} \bigr\}\in NC(10),$$

\setlength{\unitlength}{0.6cm} $$\begin{picture}(9,4)\thicklines \put(0,0){\line(0,1){3}}
\put(0,0){\line(1,0){9}} \put(9,0){\line(0,1){3}} \put(1,1){\line(0,1){2}}
\put(1,1){\line(1,0){7}} \put(4,1){\line(0,1){2}} \put(8,1){\line(0,1){2}}
\put(2,2){\line(0,1){1}} \put(2,2){\line(1,0){1}} \put(3,2){\line(0,1){1}}
\put(5,2){\line(0,1){1}} \put(6,2){\line(0,1){1}} \put(6,2){\line(1,0){1}}
\put(7,2){\line(0,1){1}} \put(-0.1,3.3){1} \put(0.9,3.3){2} \put(1.9,3.3){3}
\put(2.9,3.3){4} \put(3.9,3.3){5} \put(4.9,3.3){6} \put(5.9,3.3){7} \put(6.9,3.3){8}
\put(7.9,3.3){9} \put(8.7,3.3){10}
\end{picture}$$
The corresponding $\rho_\pi$ is
$$\rho_\pi[a_1,\dots,a_{10}]=\rho_2\Bigl(a_1\cdot \rho_3\bigl(a_2\cdot \rho_2(a_3,a_4),a_5
\cdot\rho_1(a_6)\cdot \rho_2(a_7,a_8),a_9\bigr),a_{10}\Bigr).$$

\begin{definition}
Let $(\cA,E:\cA\to\cB)$ be an operator-valued probability space. The corresponding
\emph{operator-valued free cumulants} $(\kk_n^E)_{n\in\NN}$ are defined recursively by
the \emph{moment-cumulant formulas}: for each $n\in\NN$ and all $a_1,\dots,a_n\in\cA$ we
have
\begin{equation} \label{eq:moment-cumulant}
E[a_1\cdots a_n]=\sum_{\pi\in NC(n)} \kk_\pi^E[a_1,\dots,a_n].
\end{equation}
\end{definition}

Note that in the moment-cumulant formula \eqref{eq:moment-cumulant} the right hand side
is of the form $\kk_n^E(a_1,\dots,a_n)$ plus products of lower order terms; thus this can
indeed recursively be solved for the $\kk_n^E$. There is a quite a lot one can say about
the structure of the formulas for the $\kk_n^E$, but we will not need this here and refer
for more information on this to \cite{NS,Speicher:Memoir}. Here are as examples just the
first three cumulants:
$$\kk_1^E(a_1)=E[a_1],\qquad \kk_2^E(a_1,a_2)=E[a_1a_2]-E[a_1]\cdot E[a_2]$$
and
\begin{multline*}
\kk_3^E(a_1,a_2,a_3)= E[a_1a_2a_3]-E[a_1]\cdot E[a_2a_3]-E\bigl[a_1\cdot E[a_2]\cdot
a_3\bigr]\\- E[a_1a_2]\cdot E[a_3]+2 E[a_1]\cdot E[a_2]\cdot E[a_3].
\end{multline*}

The main result which we will use about free cumulants is that they characterize freeness
via the property ``vanishing of mixed cumulants''.

\begin{theorem}[\cite{Speicher:Memoir}]
Let $(\cA,E:\cA\to\cB)$ be an operator-valued probability space and consider, for some
index set $I$, random variables $(a_i)_{i\in I}$. Then the following are equivalent:
\begin{enumerate}
\item
The random variables $(a_i)_{i\in I}$ are free with respect to $E$.
\item
We have the vanishing of mixed operator-valued free cumulants: For all $n\geq 2$, all
$i(1),\dots,i(n)\in I$, and all $b_1,\dots,b_{n-1}\in\cB$ we have
$$\kk_n^E(a_{i(1)}b_1,\dots,a_{i(n-1)} b_{n-1},a_{i(n)})=0$$
whenever there are $1\leq k,l\leq n$ such that $i(k)\not= i(l)$.
\end{enumerate}
\end{theorem}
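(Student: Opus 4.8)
The plan is to prove the equivalence of freeness with respect to $E$ and vanishing of mixed cumulants by transferring between the two languages via the moment-cumulant formula \eqref{eq:moment-cumulant}, treating both directions through careful bookkeeping of non-crossing partitions. The key technical device will be the observation that the $\cB$-bimodule structure lets us absorb the intermediate coefficients $b_1,\dots,b_{n-1}$ into the arguments, so it suffices to understand when $E[p_1(a_{i(1)})\cdots p_n(a_{i(n)})]$ vanishes for centered polynomials $p_m$. I would first reduce to the case where each $p_m$ is a single monomial $a_{i(m)}b_m$ (or $a_{i(m)}$ for the last factor), using the $\cB$-functional property of the cumulants recorded in the definition of $\rho_\pi$; multilinearity then extends any such identity to arbitrary $\cB$-valued polynomials.

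First I would prove the implication $(2)\Rightarrow(1)$, which is the more conceptual direction. Assuming vanishing of mixed cumulants, I want to compute $E[p_1(a_{i(1)})\cdots p_n(a_{i(n)})]$ for centered $p_m$ with neighboring indices distinct, by expanding via \eqref{eq:moment-cumulant} into a sum over $\pi\in NC(N)$, where $N$ is the total number of $a$-factors appearing. The centering conditions $E[p_m(a_{i(m)})]=0$ translate into the statement that any singleton block sitting inside a single polynomial block must be killed; more precisely, the crucial combinatorial fact is that the only $\pi$ surviving after imposing both ``no mixed cumulants'' and ``each $p_m$ is centered'' is empty, because any non-crossing partition either has a block connecting two different $a_{i(m)}$'s (killed by hypothesis $(2)$) or else, being non-crossing and respecting the interval structure of the $p_m$, decomposes along the polynomials in a way that forces one of the centering relations to vanish. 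I expect this to be the main obstacle: one must set up the right inductive argument on the nested interval structure of the $p_m$, peeling off an innermost block and using the recursive definition of $\rho_\pi$ together with the centering assumption. The standard approach is to argue that any surviving $\pi$ would have to be a partition that both respects and refines the block decomposition $\{p_1,\dots,p_n\}$, and then the centering forces the corresponding factor to be zero.

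For the reverse implication $(1)\Rightarrow(2)$, I would argue by induction on $n$. Freeness gives vanishing of $E$ on alternating centered products; I want to extract from this the vanishing of the genuinely mixed cumulant $\kk_n^E(a_{i(1)}b_1,\dots,a_{i(n)})$. The idea is to relate the cumulant to moments of centered variables: replacing each $a_{i(m)}b_m$ by its centered version $a_{i(m)}b_m - E[a_{i(m)}b_m]$ changes the cumulant only by lower-order terms that, by the induction hypothesis, already vanish when genuinely mixed, so one can reduce to the alternating centered situation covered directly by the definition of freeness. Here the subtlety is handling the case where consecutive indices coincide, $i(k)=i(k+1)$; this is dealt with by a separate combinatorial lemma showing that cumulants with a repeated neighboring index can be rewritten in terms of cumulants with strictly fewer arguments, preserving the ``mixed'' property, which again closes the induction. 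Throughout, the multilinearity and the $\cB$-functional property ensure the coefficients $b_m$ play no essential role beyond bookkeeping.

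Overall, both directions hinge on the same engine: the Möbius-type inversion between moments and cumulants over $NC(n)$, combined with the lattice structure of non-crossing partitions relative to the interval partition induced by the blocks $p_1,\dots,p_n$. The hard part is the combinatorial identification of exactly which $\pi$ survive the two competing constraints (mixedness and centering), and I would expect to invoke the recursive ``interval-block'' characterization of $NC(n)$ recorded after the definition of non-crossing partitions as the principal tool for making this rigorous.
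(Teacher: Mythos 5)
First, a point of reference: the paper does not prove this theorem at all --- it is quoted from \cite{Speicher:Memoir} --- so your proposal can only be compared with the standard proof in that reference (in the scalar case, Theorem 11.16 of \cite{NS}). Your overall architecture is indeed that standard one: moment--cumulant inversion over non-crossing partitions, the interval-block recursion, and a products-type reduction for coinciding neighbors. But two of your key steps fail as written, and they are precisely the two places where the real work lies.

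In $(2)\Rightarrow(1)$, after expanding $E[p_1(a_{i(1)})\cdots p_n(a_{i(n)})]$ over $NC(N)$ of all the individual $a$-factors, you claim each surviving partition is killed either by hypothesis (2) or by a centering relation, and you propose to see the latter by ``peeling off an innermost block''. This cannot work term by term: the hypothesis $E[p_m(a_{i(m)})]=0$ is a relation among a \emph{sum} of cumulant terms (over the monomials of $p_m$ and over the partitions of the $a$-factors inside $p_m$), not a property of any individual $\kk_\sigma^E$. Concretely, a singleton block consisting of one $a$-factor of $p_m$ contributes $\kk_1^E(a_{i(m)}b)=E[a_{i(m)}b]$, which has no reason to vanish even though $p_m$ is centered. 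One must first resum, and the device that does this is the formula for cumulants whose arguments are products (\cite{Speicher:Memoir}; Theorem 11.12 of \cite{NS} in the scalar case). That formula converts hypothesis (2) into: $\kk_m^E[y_{l(1)},\dots,y_{l(m)}]=0$ for $y_l:=p_l(a_{i(l)})$ whenever two of the indices $i(l(j))$ differ (any partition connecting all the relevant interval groups must have a block meeting two groups of different index, and that block's cumulant vanishes by (2)). Then one expands $E[y_1\cdots y_n]=\sum_{\pi\in NC(n)}\kk_\pi^E[y_1,\dots,y_n]$ at the level of the $y$'s: singleton blocks die by centering, blocks joining different indices die by the transferred hypothesis, and any remaining $\pi$ would have all blocks of size at least $2$ and of constant index, hence would contain an interval block joining two neighbors --- impossible since $i(1)\not=i(2)\not=\dots\not=i(n)$.

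In $(1)\Rightarrow(2)$, your centering step is also misjustified. Replacing $a_{i(m)}b_m$ by $a_{i(m)}b_m-E[a_{i(m)}b_m]$ changes $\kk_n^E$ by terms of the \emph{same} order $n$ in which one argument lies in $\cB$; these are not lower-order cumulants, so your induction hypothesis says nothing about them. What is needed is the separate (standard, but nontrivial) lemma that $\kk_n^E(a_1,\dots,a_n)=0$ for $n\geq 2$ whenever some argument belongs to $\cB$; it is proved by induction from the moment--cumulant formula and the bimodule property of $E$, and it cannot be obtained by simply absorbing the $\cB$-element into a neighbor via the $\cB$-functional property. With that lemma in hand, plus the same products formula to merge equal neighboring indices while preserving mixedness, your induction does close and the argument becomes the standard one. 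So the skeleton is right, but both load-bearing ingredients --- the products formula and the vanishing of cumulants with a $\cB$-valued entry --- are presupposed rather than proved in your sketch, and the term-by-term combinatorial claims you substitute for them are false as stated.
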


If we transfer this characterization from the $\kk_n^E$ to $\kk_\pi^E$ then freeness of
the $a_i$ implies that $\kk_\pi^E[a_{i(1)},\dots,a_{i(n)}]$ can only be non-zero when all
the $i$-indices belonging to the same block are equal. It will be convenient to have a
notation at hand which encodes that information.

\begin{notation}
For $n\in \NN$ and an $n$-tuple $\ii=(i(1),\dots,i(n))$ we denote by $\ker \ii\in\cP(n)$
that partition of $1,\dots,n$ which is determined by
$$\text{$k$ and $l$ are in the same block} \qquad\Leftrightarrow \qquad i(k)=i(l).$$
\end{notation}

With this notation we have: if $(a_i)_{i\in I}$ are free with respect to $E$, then
$\kk_\pi^E[a_{i(1)},\dots,a_{i(n)}]$ can only be non-zero for $\ker\ii\geq \pi$. Note
that $\ker \ii$ is in general a possibly crossing partition.

\section{Operator-valued free random variables\\ are invariant under Quantum Permutations}

We will now first prove the ``easy'' direction of our de Finetti theorem, namely that
random variables which are free with respect to a conditional expectation $E$ are
invariant under quantum permutations with respect to any $\ff$ which is compatible with
$E$. In contrast to the other direction this can be done in a purely algebraic frame,
thus we will treat this implication in the context of an arbitrary non-commutative
probability space. Note also that this implication does actually not require that our
sequence is infinite. This will only be crucial for the other implication.

\begin{proposition}
Let $(\cA,\ff)$ be a noncommutative probability space, $\cB\subset\cA$ a unital
subalgebra, and $E:\cA\to\cB$ a conditional expectation such that $\ff=\ff\circ E$.
Consider a sequence $(x_i)_{i\in\NN}$ in $\cA$ which is identically distributed and free
with respect to $E$. Then the joint distribution of the sequence $(x_i)_{i\in\NN}$ with
respect to $\ff$ is invariant under quantum permutations.
\end{proposition}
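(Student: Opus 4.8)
The plan is to verify the defining equality \eqref{eq:invariance} directly, by translating both sides into operator-valued free cumulants and exploiting the vanishing-of-mixed-cumulants characterization together with the defining relations of $A_s(k)$. Fix $k,n\in\NN$ and indices $1\leq i(1),\dots,i(n)\leq k$. Since $\ff=\ff\circ E$, it suffices to work at the level of the conditional expectation $E$ and then apply $\ff$; so I would first aim to prove the stronger, $\cB$-valued identity
\begin{equation}\label{eq:Einvariance}
E[x_{i(1)}\cdots x_{i(n)}]\cdot 1_{A_s(k)}=\sum_{j(1),\dots,j(n)=1}^k u_{i(1)j(1)}\cdots u_{i(n)j(n)}\otimes E[x_{j(1)}\cdots x_{j(n)}]
\end{equation}
as an identity in $A_s(k)\otimes\cB$, from which \eqref{eq:invariance} follows by applying $\id\otimes\ff$ and using $\ff\circ E=\ff$.

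The key step is to expand $E[x_{j(1)}\cdots x_{j(n)}]=\sum_{\pi\in NC(n)}\kk_\pi^E[x_{j(1)},\dots,x_{j(n)}]$ via the moment-cumulant formula \eqref{eq:moment-cumulant}, and to interchange the sum over $j(1),\dots,j(n)$ with the sum over $\pi$. For a fixed $\pi\in NC(n)$, freeness of the $x_i$ with respect to $E$ forces $\kk_\pi^E[x_{j(1)},\dots,x_{j(n)}]$ to vanish unless $\ker\jj\geq\pi$, i.e.\ unless $j$ is constant on each block of $\pi$. On those surviving terms, ``identically distributed'' means the resulting cumulant value does not depend on \emph{which} index labels the block, only on the block structure $\pi$; so I can pull a single $\cB$-valued constant $c_\pi:=\kk_\pi^E[x_1,\dots,x_1\mid\pi]$ (the common value) out front. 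The entire $j$-dependence then lives in the coefficient $\sum_{j:\,\ker\jj\geq\pi} u_{i(1)j(1)}\cdots u_{i(n)j(n)}\in A_s(k)$, where the sum is over all $j$ constant on blocks of $\pi$.

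It then remains to show, purely inside $A_s(k)$, that for each $\pi\in NC(n)$
$$\sum_{\substack{j(1),\dots,j(n)=1\\ \ker\jj\geq\pi}}^k u_{i(1)j(1)}\cdots u_{i(n)j(n)}
=\begin{cases} u_{i(1)j(1)}\cdots u_{i(n)j(n)}\big|_{\ker\jj\geq\pi,\ \text{collapsed}} & \text{if }\ker\ii\geq\pi,\\[1mm] 0 & \text{otherwise,}\end{cases}$$
and more precisely that summing this coefficient against $c_\pi$ and then over $\pi$ reassembles exactly $E[x_{i(1)}\cdots x_{i(n)}]\cdot 1$. This is where the relations of Definition \ref{def:quantumpermutation} do the work: when two legs $s,t$ lie in the same block of $\pi$ we are led to products $u_{i(s)j}\,\cdots\,u_{i(t)j}$ sharing the column index $j$, and summing over that common $j$ uses $\sum_{j}u_{ij}=1$, while whenever $i(s)\neq i(t)$ within a block one encounters a factor $u_{i(s)j}u_{i(t)j}=0$ (orthogonality within a column), which is precisely the algebraic shadow of $\ker\ii\geq\pi$. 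I expect the main obstacle to be bookkeeping: the projections $u_{ij}$ do \emph{not} commute, so these row/column collapses must be carried out respecting the fixed left-to-right order of the word $u_{i(1)j(1)}\cdots u_{i(n)j(n)}$, and one must check that the non-crossing (interval) structure of $\pi$ is exactly what guarantees each collapse can be performed on an adjacent, contiguous segment without disturbing the rest of the word. Organizing this as an induction on the number of blocks of $\pi$ (peeling off an interval block, mirroring the recursive definition of $\kk_\pi^E$ and of $NC(n)$) should make the contiguity automatic and turn the obstacle into a clean induction that matches the cumulant recursion term by term.
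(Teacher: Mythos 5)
Your proposal is correct and follows essentially the same route as the paper's proof: expand $E[x_{j(1)}\cdots x_{j(n)}]$ by the moment-cumulant formula, use vanishing of mixed cumulants to restrict to $\ker\jj\geq\pi$, use identical distribution to pull out a common cumulant value for each $\pi$, and prove by induction (peeling off interval blocks of the non-crossing $\pi$, using $u_{i(s)j}u_{i(t)j}=0$ for $i(s)\neq i(t)$ and $\sum_{j}u_{ij}=1$) that the resulting coefficient in $A_s(k)$ equals $1$ when $\ker\ii\geq\pi$ and $0$ otherwise, after which the surviving terms reassemble into $E[x_{i(1)}\cdots x_{i(n)}]$. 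The only (immaterial) difference is that you keep the identity $\cB$-valued in $A_s(k)\otimes\cB$ and apply $\id\otimes\ff$ at the end, whereas the paper applies $\ff$ at the outset so that the cumulant values become scalars.
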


\begin{proof}
Fix $n,k$ and $\ii=(i(1),\dots,i(n))$ with $1\leq i(1),\dots,i(n)\leq k$. We have
\begin{align*}
&\sum_{j(1),\dots,j(n)=1}^k u_{i(1)j(1)}\cdots u_{i(n)j(n)}\cdot \ff\bigl(
x_{j(1)}\cdots x_{j(n)}\bigr)\\
&= \sum_{j(1),\dots,j(n)=1}^k u_{i(1)j(1)}\cdots u_{i(n)j(n)}\cdot
\ff\bigl(E[x_{j(1)}\cdots
x_{j(n)}]\bigr)\\
&=\sum_{j(1),\dots,j(n)=1}^k u_{i(1)j(1)}\cdots u_{i(n)j(n)} \cdot\ff\bigl(\sum_{\pi\in
NC(n)} \kk_\pi^E[x_{j(1)},\dots, x_{j(n)}]\bigr)\\
&=\sum_{\pi\in NC(n)}\sum_{j(1),\dots,j(n)=1}^k u_{i(1)j(1)}\cdots u_{i(n)j(n)}
\cdot\ff\bigl( \kk_\pi^E[x_{j(1)},\dots, x_{j(n)}]\bigr).
\end{align*}
Now we note that because of the vanishing of mixed cumulants for free variables the term
$\kk_\pi^E[x_{j(1)},\dots, x_{j(n)}]$ is only non-vanishing if $\ker \jj\geq\pi$, where
$\jj=(j(1),\dots,j(n))$. Furthermore, by the identical distribution with respect to $E$
of our random variables, for any $\jj$ with $\ker \jj\geq\pi$ the term
$\kk_\pi^E[x_{j(1)},\dots, x_{j(n)}]$ has the same value, which we denote by $\kk_\pi^E$.
Thus we can continue the above calculation as follows:
\begin{multline*}
\sum_{j(1),\dots,j(n)=1}^k u_{i(1)j(1)}\cdots u_{i(n)j(n)}\cdot \ff(x_{j(1)}\cdots x_{j(n)})\\
=\sum_{\pi\in NC(n)}\ff\bigl( \kk_\pi^E\bigr) \sum_{\substack{{j(1),\dots,j(n)=1,\dots,k}\\{\ker
\jj \; \geq \pi}}} u_{i(1)j(1)}\cdots u_{i(n)j(n)}.
\end{multline*}
The sum over $j(1),\dots,j(n)$ with $\ker \jj\geq \pi$ means that we sum for each block
of $\pi$ independently over one $j$-variable. Since $\pi$ is non-crossing at least one of
its blocks is an interval, i.e., of the form $\{p,p+1,p+2,\dots,p+s\}$ for some $1\leq
p\leq p+s\leq k$. Then we have $j(p)=j(p+1)=\cdots=j(p+s)$, the sum over this variable is
independent of the other sums; and it only involves
$$\sum_{j=1}^k u_{i(p)j}u_{i(p+1)j}\cdots u_{i(p+s)j}.$$
Because of the orthogonality of different elements in the same row of
$u=(u_{ij})_{i,j=1}^k$, the term $u_{i(p)j}u_{i(p+1)j}\cdots u_{i(p+s)j}$ is zero for any
$j$ unless $i(p)=i(p+1)=\dots=i(p+s)$. In the latter case, $u_{i(p)j}u_{i(p+1)j}\cdots
u_{i(p+s)j}=u_{i(p)j}$ and the sum over $j$ just gives $1$. In this way we are left with
the same problem as before but with the positions $p,p+1,\dots,p+s$ removed. For $\pi$ we
have just removed one of its interval blocks. Since $\pi$ is non-crossing, we can now
find another interval block in the new partition and repeat the above argument. In this
way we can do all the summations over the blocks of $\pi$ in an inductive way. In each
step the $i$-indices must agree on the considered block of $\pi$ to get a non-vanishing
contribution. If they do then the summation over the $j$-index for this block gives 1. So
we get in the end that
$$\sum_{\substack{{j(1),\dots,j(n)=1,\dots,k}\\{\ker \jj\;\geq \pi}}}
u_{i(1)j(1)}\cdots u_{i(n)j(n)}=\begin{cases} 1,&\ker \ii\geq\pi\\
0,& \text{otherwise}
\end{cases}.
$$
Thus, by recalling that $\kk_\pi^E$ is equal to $\kk_\pi^E[x_{i(1)},\dots, x_{i(n)}]$ for
any $i$ with $\ker \ii\geq \pi$,
 we have
\begin{alignat*}{1}
\sum_{j(1),\dots,j(n)=1}^k u_{i(1)j(1)}\cdots u_{i(n)j(n)}&\cdot \ff(x_{j(1)}\cdots x_{j(n)})\\
&=\sum_{\substack{{\pi\in NC(n)}\\{\ker \ii\;\geq\pi}}}\ff\bigl( \kk_\pi^E\bigr)
=\ff\bigl(\sum_{\substack{{\pi\in NC(n)}\\{\ker \ii\;\geq\pi}}} \kk_\pi^E\bigr)\\
&=\ff\bigl(\sum_{\substack{{\pi\in NC(n)}\\{\ker \ii\;\geq\pi}}} \kk_\pi^E[x_{i(1)},\dots, x_{i(n)}]\bigr)\\
&=\ff\bigl(E[x_{i(1)}\cdots x_{i(n)}]\bigr)
=\ff\bigl(x_{i(1)}\cdots x_{i(n)}\bigr).
\end{alignat*}
\end{proof}

\section{Properties of the conditional expectation\\ onto the tail algebra}

In order to make the step from quantum exchangeability to freeness with amalgamation we
need some more analytic structure.

\begin{notation}
Consider a $W$-probability space $(\cA,\ff)$, i.e., $\cA$ is a von Neumann algebra and $\ff$ is
a faithful normal state on $\cA$. Consider a sequence of random variables $(x_i)_{i\in\NN}$ in
$\cA$.
\\
1) We denote by $\cA_\infty$ the von Neumann subalgebra generated by $(x_i)_{i \in\NN}$,
and by $\ff_\infty$ the restriction of $\ff$ to $\cA_\infty$.
\\
2) The \emph{tail algebra} of the sequence $(x_i)_{i\in\NN}$ is given by
$$\cA_\tail:=\bigcap_{n=1}^\infty vN(x_k\mid k\geq n),$$
where $vN(x_k\mid k\geq n)\subset\cA$ is the von Neumann algebra generated
by all $x_k$ with $k\geq n$.
\end{notation}
$\cA_\tail$ is a von Neumann subalgebra of $\cA_\infty$ (and thus of $\cA$). In special
cases, $\cA_\tail$ might be trivial, i.e., equal to $\CC 1$, but in general it can be
bigger. In any case, if our sequence is exchangeable, then there exists the unique
$\ff_\infty$-preserving conditional expectation $E\colon \cA_\infty \to \cA_\tail$.
($\ff_\infty$-preserving means of course that $\ff_\infty\circ E= \ff_\infty$.) This is
clear if $\ff$ is a trace. (In this case one does of course not need the exchangeability
and one can introduce $E$ directly as a map from $\cA$ onto $\cA_\tail$.) The general
case, which allows non-tracial states, is treated in \cite{Claus:factorization} and we
adapt a proof from therein for the convenience of the reader.

\begin{proposition}\label{prop:condexp}
Let $(\cA,\varphi)$ be a $W^*$-probability space and suppose the sequence $(x_i)_{i \in
\NN} \subset \cA$ is exchangeable. Then there exists the $\varphi_\infty$-preserving
conditional expectation $E$ from $\cA_\infty$ onto $\cA_\tail$.
\end{proposition}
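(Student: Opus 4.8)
The plan is to establish the existence of an $\ff_\infty$-preserving conditional expectation $E \colon \cA_\infty \to \cA_\tail$ by exploiting the classical criterion that such a conditional expectation exists if and only if the modular automorphism group $(\sigma_t^{\ff_\infty})_{t \in \RR}$ leaves the subalgebra $\cA_\tail$ globally invariant (this is Takesaki's theorem). So the first move is to show that $\cA_\tail$ is invariant under the modular group of $\ff_\infty$. The main leverage here is exchangeability: each finite permutation $\sigma \in S_\infty$ (acting by permuting the indices of the $x_i$) preserves the joint distribution, hence extends to a $\ff_\infty$-preserving automorphism $\alpha_\sigma$ of $\cA_\infty$. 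Because $\alpha_\sigma$ preserves the state, it commutes with the modular automorphism group, i.e. $\alpha_\sigma \circ \sigma_t^{\ff_\infty} = \sigma_t^{\ff_\infty} \circ \alpha_\sigma$ for all $t$.

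First I would set up the relevant tower of von Neumann algebras: let $\cA_n := vN(x_k \mid k \geq n)$, so that $\cA_\tail = \bigcap_n \cA_n$ with $\cA_1 = \cA_\infty$. The shift that sends $x_i \mapsto x_{i+1}$ is only an endomorphism, not an automorphism, so I would instead work with the full symmetric group of finite permutations. The key structural observation is that $\cA_\tail$ can be characterized as the fixed-point-like object that is invariant under \emph{all} the state-preserving automorphisms $\alpha_\sigma$ coming from permutations; more precisely, an element lies arbitrarily ``far out'' in the tail exactly when it is insensitive to permutations of any initial segment of indices. Concretely, for any $\sigma \in S_\infty$ fixing all indices $\geq n$, one has $\alpha_\sigma(\cA_m) = \cA_m$ for $m \geq n$, and since $\cA_\tail$ sits inside every $\cA_m$, the tail algebra is globally invariant under each $\alpha_\sigma$.

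The decisive step is then to transfer modular invariance to $\cA_\tail$. Since each $\alpha_\sigma$ is $\ff_\infty$-preserving, uniqueness of the modular group gives $\sigma_t^{\ff_\infty} \circ \alpha_\sigma = \alpha_\sigma \circ \sigma_t^{\ff_\infty}$. I would use this commutation together with an averaging or limiting argument over permutations to show that $\sigma_t^{\ff_\infty}(\cA_\tail) \subseteq \cA_\tail$. The cleanest route is to argue that $\sigma_t^{\ff_\infty}$ preserves each $\cA_n$ up to the symmetry: for fixed $n$ and any finite set of indices one can permute the ``tail'' indices freely, and modular invariance of the state forces $\sigma_t^{\ff_\infty}(\cA_n) \subseteq \cA_n$ for every $n$; intersecting over $n$ yields $\sigma_t^{\ff_\infty}(\cA_\tail) \subseteq \cA_\tail$. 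Once modular invariance is in hand, Takesaki's theorem produces the unique $\ff_\infty$-preserving conditional expectation $E \colon \cA_\infty \to \cA_\tail$, which is exactly what we want.

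The main obstacle I anticipate is the non-tracial case: in a tracial $W^*$-probability space the conditional expectation onto any von Neumann subalgebra exists automatically, so the content of the proposition lies entirely in handling a general faithful normal state $\ff$. The crux is therefore verifying the modular invariance $\sigma_t^{\ff_\infty}(\cA_n) \subseteq \cA_n$ rigorously, and in particular ruling out that the modular flow drags tail elements out of the tail. This is where exchangeability must be used in an essential way rather than cosmetically: the abundance of state-preserving symmetries $\alpha_\sigma$ is precisely what constrains $\sigma_t^{\ff_\infty}$ enough to respect the tail filtration. I expect the argument will hinge on carefully combining the commutation relation with the fact that permutations can mix any finite block of tail generators while fixing the rest, and then passing to the intersection.
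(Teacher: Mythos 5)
You have identified a genuinely different framing than the paper's: you want to invoke Takesaki's theorem (a $\ff_\infty$-preserving conditional expectation onto a von Neumann subalgebra of $(\cA_\infty,\ff_\infty)$ exists if and only if the subalgebra is globally invariant under the modular automorphism group $\sigma_t^{\ff_\infty}$), whereas the paper never argues via modular invariance: it constructs $E$ explicitly as the pointwise weak-operator limit $Q=\lim_n\alpha^n$ of powers of the shift endomorphism, checks that $Q$ is a $\ff$-preserving norm-one projection fixing $\cA_\tail$ with range in $\cA_\tail$, and then quotes the characterization of such projections as conditional expectations. Several of your preparatory steps are sound: each finite permutation $\sigma$ does induce a $\ff_\infty$-preserving normal automorphism $\alpha_\sigma$ of $\cA_\infty$; each $\alpha_\sigma$ does commute with $\sigma_t^{\ff_\infty}$; and each $\alpha_\sigma$ acts as the identity on $\cA_\tail$, since $\sigma$ fixes all indices $k\geq N$ for some $N$, hence $\alpha_\sigma=\id$ on $vN(x_k\mid k\geq N)\supseteq\cA_\tail$.

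The gap is at the decisive step, which you assert rather than prove: that $\sigma_t^{\ff_\infty}$ leaves $\cA_\tail$ (equivalently, each $\cA_n:=vN(x_k\mid k\geq n)$) invariant. The commutation $\alpha_\sigma\circ\sigma_t^{\ff_\infty}=\sigma_t^{\ff_\infty}\circ\alpha_\sigma$ only yields modular invariance of algebras defined in terms of the $\alpha_\sigma$'s, for instance the fixed-point algebra $\cA^{S_\infty}:=\{a\in\cA_\infty : \alpha_\sigma(a)=a\text{ for all finite }\sigma\}$. But the inclusion available for free is $\cA_\tail\subseteq\cA^{S_\infty}$ (that is exactly your observation that permutations act trivially on the tail), and this goes the wrong way: invariance of the \emph{larger} algebra $\cA^{S_\infty}$ under $\sigma_t^{\ff_\infty}$ says nothing about invariance of $\cA_\tail$ itself. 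To close the argument you would need either the reverse inclusion $\cA^{S_\infty}\subseteq\cA_\tail$ (a noncommutative Hewitt--Savage law; the paper proves the shift analogue $\cA_\alpha=\cA_\tail$ separately, in the proof of Proposition \ref{prop:factorization}, by an approximation argument), or else your claim that $\sigma_t^{\ff_\infty}(\cA_n)\subseteq\cA_n$ for every $n$ --- which, by the converse direction of Takesaki's theorem, is \emph{equivalent} to the existence of $\ff_\infty$-preserving conditional expectations onto every $\cA_n$, i.e., to something at least as strong as the proposition you are trying to prove. Neither follows from the slogan that ``permutations can mix any finite block of tail generators'': both statements can indeed be established (the second, for example, by realizing the conditional expectation onto $\cA_n$ as a pointwise weak-operator limit of the automorphisms induced by transpositions $(m\,\,n\!-\!1)$, etc., with $m\to\infty$), but any such proof requires exactly the stabilization-of-moments and weak-operator-limit analysis that constitutes the paper's proof. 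So the modular-theoretic reformulation does not bypass the analytic core of the result; as written, your proposal defers it to an unproven claim.
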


\begin{proof}
We can assume that $\cA$ is generated by $(x_i)_{i \in \NN}$, i.e., that
$\cA=\cA_\infty$. Now exchangeability implies the stationarity of $(x_i)_{i \in \NN}$ and
thus the existence of an endomorphism $\alpha$ of $\cA$ such that
\[
\varphi \circ \alpha = \varphi \quad \text{and} \quad \alpha(x_i)= \alpha(x_{i+1}).
\]
Let $\cA_I:= vN(x_i | i \in I)$ for $I \subset \NN$ and suppose $a,b \in \bigcup_{|I|<\infty}
\cA_{I}$. Consequently we can assume $a\in \cA_I$ and $b \in \cA_J$ such that there exists $N
\in \NN$ with $I \cap (J +N)=\emptyset$. We infer from exchangeability  that $\varphi(b
\alpha^n(a)) = \varphi(b \alpha^{n+1}(a))$ for all $n \ge N$. Due to minimality this
establishes the limit
\[
\lim_{n\to \infty} \varphi(b \alpha^n(a))
\]
on the weak*-dense *-algebra $\bigcup_{|I|<\infty}\cA_{I}$. A standard approximation argument
ensures now the existence of this limit for $a,b \in \cA$, using the norm density of the
functionals $\{\varphi(b\,\cdot)| b \in \cA\}$ and the boundedness of the set $\{\alpha^n(a)| n
\ge 0\}$. We conclude from this that the pointwise limit of the sequence $(\alpha^n)_{n\in\NN}$
(in the weak operator topology) defines a linear map $Q \colon \cA \to \cA$ such that $Q(\cA)
\subset \cA_\tail$.

It is easily seen that the linear map $Q$ enjoys
\[
\varphi = \varphi \circ Q\quad \text{and} \quad \|Q(a)\| \le \|a\| \text{ for $a\in \cA$.}
\]
Thus $Q$ is a conditional expectation from $\cA$ onto $\cA_\tail$, if we can ensure that $Q(a)
= a$ for all $a \in \cA_\tail$ (see, e.g., \cite{Tak}). To this end let $a \in \cA_\tail$ and
$b \in \bigcup_{|I|<\infty} \cA_I$. We infer from $\cA_\tail\ \subset \alpha^N (\cA)$ and
$\cA_{[N,\infty)} \subset \alpha^N(\cA)$  for all $N \in \NN$ that there exists some $N \in
\NN$ such that $a \in \alpha^N(\cA)$ and $b \in \cA_{[0,N-1]}$. We approximate $a \in \cA$ in
the weak operator topology by a sequence $(a_k)_{k\in\NN} \subset
\bigcup_{|I|<\infty}\alpha^{N}(\cA_{I})$ and conclude further from the definition of $Q$ and
from exchangeability that
\begin{multline*}
\varphi(b Q(a)) = \lim_{k} \varphi(b Q(a_k)) = \lim_k \lim_n \varphi(b \alpha^n(a_k)) = \lim_k
\varphi(b a_k) = \varphi(b a).
\end{multline*}
This shows that $Q(a)= a$ for all $a \in \cA_\tail$. Thus $Q$ is the conditional
expectation of $\cA$ onto $\cA_\tail$ with respect to $\varphi$, which we denote from now
on by $E$.
\end{proof}

Our main goal will be to show that quantum exchangeability implies freeness with respect
to this ($\ff_\infty$-preserving) conditional expectation $E:\cA_\infty \to \cA_\tail$.
Note that, in the non-tracial case, we do \emph{not} define $E$ on $\cA$, but only on
$\cA_\infty \subset \cA$. This is no problem, however, since all our statements on
distribution and freeness with respect to E involve only elements from $\cA_\infty$. So,
in the present section and in Section \ref{section:invariance}, the conditional
expectation $E$ will always be understood as introduced in Proposition
\ref{prop:condexp}. If the reader prefers, she may throughout assume that $\cA$ is
generated by the considered sequence of random variables, i.e., that $\cA = \cA_\infty$
and $\ff=\ff_\infty$.

Let us first check that quantum exchangeability with respect to $\ff$ extends to the same
property with respect to $E$.

\begin{proposition}\label{prop:invarianceE}
Let $(\cA,\ff)$ be a $W^*$-probability space, $(x_i)_{i\in\NN}$ a sequence in $\cA$, and
$E$ the conditional expectation onto the corresponding tail algebra $\cA_\tail$. Assume
that the joint distribution of $(x_i)_{i\in\NN}$ with respect to $\ff$ is invariant
under quantum permutations. Then the same is true for the joint distribution of $(x_i)_{i\in\NN}$
with respect to $E$, i.e., for each $k\in\NN$ and $u=(u_{ij})_{i,j=1}^k$ the generating matrix
of $A_s(k)$, we have
for all $n\in\NN$, all $1\leq i(1),\dots,i(n)\leq k$ and all $b_2,\dots,b_n\in\cAt$ that
\begin{multline}\label{eq:invarianceE}
E[x_{i(1)}b_2 x_{i(2)}\cdots b_{n}x_{i(n)}]\\=\sum_{j(1),\dots,j(n)=1}^k
u_{i(1)j(1)}\cdots u_{i(n)j(n)}\cdot E[x_{j(1)}b_2 x_{j(2)}\cdots  b_{n} x_{j(n)}].
\end{multline}
More generally, for any $p_1,\dots,p_n\in\cAtX$ we have
\begin{multline}\label{eq:invarianceEgeneral}
E[p_1(x_{i(1)}) p_2(x_{i(2)})\cdots p_n(x_{i(n)})]\\=\sum_{j(1),\dots,j(n)=1}^k
u_{i(1)j(1)}\cdots u_{i(n)j(n)}\cdot E[p_1(x_{j(1)}) p_2(x_{j(2)})\cdots  p_n(x_{j(n)})].
\end{multline}
\end{proposition}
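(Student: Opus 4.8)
The plan is to reduce both operator-valued identities to a single scalar identity at the level of $\ff$ and then to exploit faithfulness of $\ff_\infty$. Since $E$ is the $\ff_\infty$-preserving conditional expectation onto $\cAt$, its bimodule property together with $\ff=\ff\circ E$ gives $\ff(c\,a)=\ff(c\,E[a])$ for all $c\in\cAt$ and all $a\in\cA_\infty$. Both sides of \eqref{eq:invarianceE} lie in the algebraic tensor product $A_s(k)\otimes\cAt$ (the left side being read as $1\otimes E[\cdots]$), and the slice maps $\id\otimes\ff(c\,\cdot)$ with $c\in\cAt$ separate its points, because by faithfulness the functionals $\{\ff(c\,\cdot)\mid c\in\cAt\}$ separate points of $\cAt$. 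Applying such a slice map to the two sides of \eqref{eq:invarianceE} and using $\ff(c\,E[w])=\ff(c\,w)$, I see that \eqref{eq:invarianceE} follows once I establish, as an identity in $A_s(k)$,
\begin{multline*}
\ff(c\,x_{i(1)}b_2 x_{i(2)}\cdots b_n x_{i(n)})\\
=\sum_{j(1),\dots,j(n)=1}^k u_{i(1)j(1)}\cdots u_{i(n)j(n)}\cdot\ff(c\,x_{j(1)}b_2 x_{j(2)}\cdots b_n x_{j(n)})
\end{multline*}
for all $c,b_2,\dots,b_n\in\cAt$.

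The heart of the matter is this scalar identity, and the point is to feed the tail elements $c,b_2,\dots,b_n$ into the quantum-exchangeability relation \eqref{eq:invariance} without disturbing the indices $i(1),\dots,i(n)$. First I would approximate each of $c,b_2,\dots,b_n\in\cAt\subset vN(x_m\mid m>k)$, in the $\sigma$-strong${}^\ast$ topology and by uniformly bounded nets (Kaplansky density), by $\ast$-polynomials in the ``far'' variables $\{x_m\mid k<m\le K\}$ for some large $K$. For fixed approximants the word $c\,x_{i(1)}b_2\cdots x_{i(n)}$ becomes a finite linear combination of monomials in the $x$'s in which the genuine variables $x_{i(1)},\dots,x_{i(n)}$ carry indices $\le k$ while all approximating variables carry indices in $(k,K]$. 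To these monomials I apply \eqref{eq:invariance} not with $u$ itself but with the block matrix $\tilde u$, whose entries are the $u_{ij}$ in the upper-left $k\times k$ corner and the identity permutation (entries $0$ and $1$) on the remaining indices; one checks at once that $\tilde u$ satisfies the defining relations of $A_s(K)$. Since $\tilde u_{m m'}=\delta_{m m'}$ for $m>k$, the summations attached to the far variables collapse and leave those indices fixed, while the positions of $x_{i(1)},\dots,x_{i(n)}$ contribute exactly the factors $u_{i(1)j(1)},\dots,u_{i(n)j(n)}$ in their original order; these factors live in the embedded copy of $A_s(k)$, so the resulting identity holds in $A_s(k)$. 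Letting the approximants converge, and using normality of $\ff$ together with joint $\sigma$-strong${}^\ast$ continuity of multiplication on bounded sets, yields the scalar identity for all $c,b_2,\dots,b_n\in\cAt$.

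Combining the first two paragraphs proves \eqref{eq:invarianceE}. For the general statement \eqref{eq:invarianceEgeneral} I would expand each $p_l\in\cAtX$ into monomials $b_0 X b_1\cdots X b_r$; then every term of $p_1(x_{i(1)})\cdots p_n(x_{i(n)})$ is, after merging adjacent tail factors, a word $\tilde b_0 x_{l(1)}\tilde b_1\cdots x_{l(N)}\tilde b_N$ whose index string $(l(1),\dots,l(N))$ lists $i(1)$ repeatedly, then $i(2)$ repeatedly, and so on in consecutive blocks. Pulling the leading and trailing tail factors out by the bimodule property of $E$ and applying \eqref{eq:invarianceE} to the interior word produces $\sum_{j'} u_{l(1)j'(1)}\cdots u_{l(N)j'(N)}\,E[\tilde b_0 x_{j'(1)}\cdots x_{j'(N)}\tilde b_N]$; within each block the indices $l$ coincide, so row-orthogonality of $u$ forces the corresponding $j'$-indices to coincide as well and collapses each block to a single factor $u_{i(l)j(l)}$ with a single summation over $j(l)$. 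Re-assembling the blocks turns the word back into $p_1(x_{j(1)})\cdots p_n(x_{j(n)})$, and summing over the monomials gives \eqref{eq:invarianceEgeneral}. I expect the genuine \emph{obstacle} to be the approximation step of the middle paragraph: one must guarantee that tail elements are approximable by polynomials in arbitrarily far variables and that both the products and the state survive the limit, which is exactly where normality and faithfulness of $\ff_\infty$, rather than mere algebraic structure, enter.
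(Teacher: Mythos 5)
Your proposal is correct and takes essentially the same route as the paper's own proof: reduce \eqref{eq:invarianceE} to a scalar identity for $\ff$ tested against tail elements (using $\ff=\ff\circ E$ and faithfulness of $\ff$), approximate those tail elements by Kaplansky-bounded polynomials in the far variables $x_m$, $m>k$, and apply quantum exchangeability with the block extension $\tilde u$ of $u$ that acts as the identity permutation on the far indices, so that the far summations collapse. Your derivation of \eqref{eq:invarianceEgeneral} from \eqref{eq:invarianceE} via row-orthogonality (collapsing repeated adjacent indices by $u_{ij}u_{ij'}=\delta_{jj'}u_{ij}$ together with $\sum_{j=1}^k u_{ij}=1$) is the same computation the paper phrases as compatibility of the formulas under merging two $x$'s and inserting a factor from $\cAt$.
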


\begin{proof}
Fix $n$, $k$, $i(1),\dots,i(n)$. Because of $\ff|_{\cA_\infty}=\ff_\infty=\ff_\infty\circ
E$, \eqref{eq:invarianceE} will follow if we can show that
\begin{multline}\label{eq:toshow}
\ff\bigl(b_1 x_{i(1)}b_2 x_{i(2)}\cdots b_n x_{i(n)}\bigr)\\=\sum_{j(1),\dots,j(n)=1}^k
u_{i(1)j(1)}\cdots u_{i(n)j(n)}\cdot \ff\bigl(b_1 x_{j(1)} b_2 x_{j(2)}\cdots b_n
x_{j(n)}\bigr)
\end{multline}
for all $b_1,\dots,b_n\in\cA_\tail$. This will follow if we can show Equation \eqref{eq:toshow}
for $b_1,\dots,b_n$ of the form $x_{r(1)}\cdots x_{r(p)}$ for $p\in\NN$ and all
$r(1),\dots,r(p)\geq k+1$. (Note that those $b_1, \ldots, b_n$ are not from the tail algebra,
but we can use them to approximate elements from $\cA_\tail$. Indeed, by Kaplansky's theorem,
these approximation can be done on a norm bounded set, where the multiplication of elements is
continuous in the strong operator topology.) Fix such a choice of $b_1,\dots,b_n$ and let $N$
be the maximum of all indices appearing in the product $b_1 x_{i(1)}b_2 x_{i(2)}\cdots b_n
x_{i(n)}$ (written as a product in $x$'s). We extend now the $u$ from $A_s(k)$ to a matrix
$\tilde u=(\tilde u_{ij})_{i,j=1}^N$ according to
$$\tilde u_{ij}=\begin{cases}
u_{ij},& \text{if $1\leq i,j\leq k$}\\
\delta_{ij},&\text{otherwise}
\end{cases}.$$
Then this $\tilde u$ satisfies the defining relations of $A_s(N)$ and the quantum
exchangeability of $x_1,\dots,x_N$ under the action of $\tilde u$ yields exactly Equation
\eqref{eq:toshow}. (Note that a priori we also get factors $\tilde u_{rj}$ corresponding
to the factors $x_r$ of the $b$'s, however, all those will just give a $\delta_{rj}$
contribution, as in this case $r\geq k+1$; thus the $b$'s from the left side of the
equation will just reproduce on the right side of the equation. Also the summation over
the $j(m)$-indices for the $x_{i(m)}$ will a priori be from 1 to $N$, but the factor
$\tilde u_{i(m)j(m)}$ restricts this to the range from 1 to $k$, since $i(m)\leq k$.)

Equation \eqref{eq:invarianceEgeneral} follows from \eqref{eq:invarianceE} by
multilinearity and by checking that we have compatibility of our formulas under
multiplying two $x_i$ together and under inserting a factor $b\in\cA_\tail$. But this is
clear from the relations of the $u_{ij}$; the first compatibility follows from
$$\sum_{j(r),j(r+1)=1}^k u_{ij(r)}u_{ij(r+1)}=\sum_{j(r)=1}^k u_{ij(r)}$$
and the second one from $\sum_{j=1}^k u_{ij}=1$.
\end{proof}

It is clear that the same arguments work also for the case of exchangeability. Since we
will use this version in the proof of Proposition \ref{prop:factorization}, let us state
it here explicitly for later use.

\begin{proposition}\label{prop:invarianceEclassical}
Let $(\cA,\ff)$ be a $W^*$-probability space, $(x_i)_{i\in\NN}$ a
sequence in $\cA$, and $E$ the conditional expectation
onto the corresponding tail algebra $\cA_\tail$. Assume that the joint distribution of
$(x_i)_{i\in\NN}$ with respect to $\ff$ is invariant under classical permutations. Then the
same is true for the joint distribution of $(x_i)_{i\in\NN}$ with respect to $E$, i.e., for
each $k\in\NN$, we have for all $n\in\NN$, all $1\leq i(1),\dots,i(n)\leq k$ and all
$b_2,\dots,b_n\in\cAt$ that 
\begin{equation*}
E[x_{i(1)}b_2 x_{i(2)}\cdots b_{n}x_{i(n)}]=E[x_{\sigma(i(1))}b_2 x_{\sigma(i(2))}\cdots
b_{n} x_{\sigma(i(n))}]
\end{equation*}
for each permutation $\sigma\in S_k$.

More generally, for any $p_1,\dots,p_n\in\cAtX$ we have
\begin{multline}\label{eq:invarianceEgeneralclassical}
E[p_1(x_{i(1)}) p_2(x_{i(2)})\cdots p_n(x_{i(n)})]\\= E[p_1(x_{\sigma(i(1))})
p_2(x_{\sigma(i(2))})\cdots p_n(x_{\sigma(i(n))})]
\end{multline}
for each permutation $\sigma\in S_k$.
\end{proposition}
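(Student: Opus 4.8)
The plan is to reproduce, in the classical setting, the argument already used for Proposition \ref{prop:invarianceE}, simply replacing the generating matrix $u$ of $A_s(k)$ by an honest permutation matrix; this direction is strictly easier, since no summation over $j$-indices and none of the row/column relations are needed. First I would fix $n$, $k$, indices $i(1),\dots,i(n)$ and a permutation $\sigma\in S_k$. Both sides of the asserted identity lie in $\cAt$, so by faithfulness of $\ff_\infty$ it suffices to check that they agree after multiplication on the left by an arbitrary $b_1\in\cAt$ and application of $\ff_\infty$. Using the module property of $E$ together with $\ff_\infty=\ff_\infty\circ E$, each such comparison collapses to the scalar identity
\[
\ff\bigl(b_1 x_{i(1)}b_2 x_{i(2)}\cdots b_n x_{i(n)}\bigr)=\ff\bigl(b_1 x_{\sigma(i(1))}b_2 x_{\sigma(i(2))}\cdots b_n x_{\sigma(i(n))}\bigr),
\]
which I then have to establish for all $b_1,\dots,b_n\in\cAt$.

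Next I would reduce to the case in which every $b_m$ is a monomial $x_{r(1)}\cdots x_{r(p)}$ with all indices $r\geq k+1$. Since $\cAt\subset vN(x_r\mid r\geq k+1)$, each tail-algebra element is a strong-operator limit of such monomials; by Kaplansky's density theorem this approximation may be performed on a norm-bounded set, on which multiplication is jointly strong-operator continuous and $\ff$ is continuous, so it is enough to verify the scalar identity for monomial $b$'s. For a fixed such word, let $N$ be the largest index occurring anywhere in it, and extend $\sigma$ to $\tilde\sigma\in S_N$ by letting it act as $\sigma$ on $\{1,\dots,k\}$ and as the identity on $\{k+1,\dots,N\}$. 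Ordinary exchangeability of $(x_1,\dots,x_N)$ under $\tilde\sigma$ now yields the identity at once: every factor coming from the $b$'s carries an index $\geq k+1$ and is therefore fixed by $\tilde\sigma$, so the $b$'s reproduce unchanged, while each $x_{i(m)}$ (with $i(m)\leq k$) is carried to $x_{\sigma(i(m))}$.

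Finally, the general polynomial version \eqref{eq:invarianceEgeneralclassical} follows from the special case by multilinearity in the $\cAt$-coefficients of the $p_m$, together with the obvious compatibility under merging two consecutive $x$-factors and under inserting a factor $b\in\cAt$ --- both automatic here, since $\sigma$ acts on every $x$-factor by the same rule and does not touch elements of $\cAt$. I do not anticipate a genuine obstacle, as this is the easy analogue of the already-proven quantum statement; the only point demanding any care is the approximation step, where one must justify replacing tail-algebra elements by high-index monomials in the strong operator topology on bounded sets, exactly as invoked in the proof of Proposition \ref{prop:invarianceE}.
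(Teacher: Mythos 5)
Your proof is correct and is essentially the paper's own argument: the paper establishes Proposition \ref{prop:invarianceEclassical} simply by remarking that the proof of Proposition \ref{prop:invarianceE} carries over to classical permutations, and your steps (testing against $b_1\in\cAt$ via faithfulness and $\ff_\infty\circ E=\ff_\infty$, Kaplansky approximation by monomials in the $x_r$ with $r\geq k+1$, extending $\sigma$ to $\tilde\sigma\in S_N$ fixing the high indices, then multilinearity for the polynomial version) reproduce exactly that argument with the permutation matrix playing the role of $\tilde u$. The only cosmetic difference is that you spell out the faithfulness reduction and the compatibility checks that the paper leaves implicit.
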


In the next section we will show how the quantum exchangeability of $E$ will imply
freeness with respect to $E$. For this we will need as an important ingredient the
following factorization property of $E$. This is actually a consequence of the classical
exchangeability property with respect to $\ff$ and was shown in
\cite{Claus:factorization} for more general situations. To establish this desired
factorization property it is crucial to work with an \emph{infinite} sequence of random
variables (see also Remark \ref{rem:finitedefinetti}). In order to make the present paper
self-contained we provide the proof for this factorization in our case. For more details
and generalizations one should see \cite{Claus:factorization}. A related finite version
of that result was also considered in Lemma 2.6 of \cite{AL}.

\begin{proposition}\label{prop:factorization}
Let $(\cA,\ff)$ be a $W^*$-probability space and $(x_i)_{i\in\NN}$ a sequence in $\cA$
whose joint distribution is invariant under classical permutations. Let $E$ be the
conditional expectation onto the tail algebra $\cA_\tail$ of the sequence (see
Proposition \ref{prop:condexp}). Then $E$ has the following factorization property: for
all $n\in\NN$, all polynomials $p_1,\dots,p_n\in\cAtX$ and all $i(1),\dots,i(n)\in\NN$ we
have
\begin{multline}\label{eq:factorization}
E\bigl[p_1(x_{i(1)}) \cdots  p_l(x_{i(l)})\cdot \cdots
p_n(x_{i(n)})\bigr]\\=E\bigl[p_1(x_{i(1)}) \cdots  E[p_l(x_{i(l)})]\cdots
p_n(x_{i(n)})\bigr]
\end{multline}
whenever $i(l)$ is different from all the other $i(r)$ ($r\not=l$).
\end{proposition}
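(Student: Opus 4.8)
The plan is to isolate the distinguished factor $p_l(x_{i(l)})$ and let its index run off to infinity, exploiting that on the tail the shift acts as the identity. Write $A := p_1(x_{i(1)})\cdots p_{l-1}(x_{i(l-1)})$ and $B := p_{l+1}(x_{i(l+1)})\cdots p_n(x_{i(n)})$, and set $c := E[p_l(x_{i(l)})]\in\cAt$; since exchangeability makes the sequence identically distributed with respect to $E$, we also have $c=E[p_l(x_m)]$ for every $m$. The assertion \eqref{eq:factorization} is then just the identity $E[A\,p_l(x_{i(l)})\,B]=E[A\,c\,B]$.

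First I would use the classical-exchangeability version of invariance, Proposition \ref{prop:invarianceEclassical}, applied to the transposition $\sigma=(i(l)\;m)\in S_k$ for any $m$ exceeding all the indices $i(r)$ with $r\ne l$. Since $\sigma$ fixes all these other indices and leaves the tail-valued coefficients untouched, formula \eqref{eq:invarianceEgeneralclassical} yields $E[A\,p_l(x_{i(l)})\,B]=E[A\,p_l(x_m)\,B]=:L$; thus this quantity is one and the same for all large $m$. Here is precisely where the hypothesis that $i(l)$ differs from every other $i(r)$ is used.

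The heart of the matter is to show that $p_l(x_m)\to c$ in the weak operator topology as $m\to\infty$, and this is where the infinitude of the sequence enters. Recall from the proof of Proposition \ref{prop:condexp} the $\ff$-preserving shift endomorphism $\alpha$ with $\alpha(x_i)=x_{i+1}$ and $E=\lim_n\alpha^n$ (pointwise in the weak operator topology). The key sub-lemma I would establish is that $\alpha$ restricts to the identity on $\cAt$: indeed $\alpha(\cAt)\subseteq\cAt$ because $\alpha\bigl(\vN(x_k\mid k\ge n)\bigr)\subseteq \vN(x_k\mid k\ge n+1)$, and for $b\in\cAt$ one has $E[\alpha(b)]=\lim_n\alpha^{n+1}(b)=E[b]=b$, while $\alpha(b)\in\cAt$ forces $E[\alpha(b)]=\alpha(b)$, whence $\alpha(b)=b$. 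Consequently $\alpha$ fixes the tail coefficients of $p_l$, so that $\alpha^{\,m-i(l)}\bigl(p_l(x_{i(l)})\bigr)=p_l(x_m)$, and $E=\lim_n\alpha^n$ gives $p_l(x_m)\to E[p_l(x_{i(l)})]=c$ weakly.

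Finally I would test the constant $L$ against the tail. For any $w\in\cAt$ and every large $m$ we have $\ff(L\,w)=\ff\bigl(A\,p_l(x_m)\,B\,w\bigr)$ by $\ff=\ff\circ E$ and the bimodule property, and since right multiplication by the bounded operator $Bw$ is weakly continuous on $L^2(\cA_\infty,\ff_\infty)$,
$$\ff(L\,w)=\lim_{m\to\infty}\ff\bigl(A\,p_l(x_m)\,B\,w\bigr)=\ff\bigl(A\,c\,B\,w\bigr)=\ff\bigl(E[A\,c\,B]\,w\bigr).$$
Hence $\ff\bigl((L-E[A\,c\,B])\,w\bigr)=0$ for all $w\in\cAt$; as $L-E[A\,c\,B]\in\cAt$ and $\ff$ is faithful, taking $w=(L-E[A\,c\,B])^*$ forces $L=E[A\,c\,B]$, which is \eqref{eq:factorization}. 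I expect the main obstacle to be the sub-lemma $\alpha|_{\cAt}=\id$ together with the resulting identification $\lim_m p_l(x_m)=c$: this is exactly the point at which exchangeability of an \emph{infinite} sequence is indispensable, since it is what renders the tail algebra shift-invariant.
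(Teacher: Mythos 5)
Your proof is correct, and while it shares the paper's opening move --- Proposition \ref{prop:invarianceEclassical} applied to a suitable permutation replaces $x_{i(l)}$ by $x_m$ for all large $m$, which is exactly where the hypothesis on $i(l)$ and the infinitude of the sequence enter --- the limiting mechanism you use is genuinely different. The paper forms the Ces\`aro averages $\frac 1m\sum_{i=N+1}^{N+m}p_l(x_i)$ and invokes von Neumann's mean ergodic theorem to get convergence to $E[p_l(x_{i(l)})]$ in the \emph{strong} operator topology, which permits passing to the limit directly inside the product and inside $E$ (multiplication being jointly SOT-continuous on bounded sets, $E$ being normal). You avoid the ergodic theorem altogether and reuse the pointwise weak-operator limit $E=\lim_n\alpha^n$ already constructed in the proof of Proposition \ref{prop:condexp}, so that $p_l(x_m)=\alpha^{m-i(l)}\bigl(p_l(x_{i(l)})\bigr)\to E[p_l(x_{i(l)})]$ only weakly; since weak limits cannot be inserted into products, you compensate with the duality step --- pairing against $w\in\cAt$ under the faithful state and then taking $w=(L-E[AcB])^*$ --- which is a valid workaround. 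Similarly, your sub-lemma $\alpha|_{\cAt}=\id$ is the paper's inclusion $\cAt\subset\cA_\alpha$ of the tail algebra into the fixed-point algebra, but you derive it from $E=\lim_n\alpha^n$ together with $\alpha(\cAt)\subseteq\cAt$, whereas the paper argues by approximating $b\in\cAt$ by polynomials in high-index variables and using faithfulness of $\ff$. The trade-off: the paper's route leans on a standard ergodic theorem and obtains a limit strong enough to substitute anywhere; yours is more economical, needing nothing beyond what the proof of Proposition \ref{prop:condexp} already provides, at the price of the extra faithfulness argument at the end. One point you should make explicit: the inclusion $\alpha\bigl(\vN(x_k\mid k\ge n)\bigr)\subseteq\vN(x_k\mid k\ge n+1)$ requires $\alpha$ to be normal; this is automatic because $\alpha$ preserves the faithful normal state $\ff$, and the paper uses the same fact implicitly.
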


Note that exchangeability with respect to $\ff$ does not imply a factorization property
for $\ff$, but only for the conditional expectation $E$. This is of course responsible
for the fact that we get freeness with respect to $E$ and not with respect to $\ff$ in
our noncommutative de Finetti theorem.

\begin{proof}
Without restriction we will assume in the following that $\cA$ is generated by our
sequence, i.e., $\cA=\cA_\infty=vN(x_i\mid i\in\NN)$. By $L^2(\cA,\ff)$ we will denote
the GNS Hilbert space corresponding to $\ff$, equipped with the inner product $\la a,b\ra
=\ff(a^*b)$.

The exchangeability of our sequence implies then that we can define on $L^2(\cA,\ff)$ an
isometric shift $\alpha$ given by
$$\alpha(x_{i(1)}\cdots x_{i(n)})=x_{i(1)+1}\cdots x_{i(n)+1}.$$
Restricted to $\cA\subset L^2(\cA,\ff)$, this shift maps $\cA$ into itself and acts there
as an endomorphism. Let us denote the fixed point algebra of this shift by
$$\cA_\alpha:=\{a\in\cA\mid \alpha(a)=a\}.$$
Clearly, $\cA_\alpha\subset \cA_\tail$. We want to show that also $\cA_\tail \subset
\cA_{\alpha}$. For this, fix $b \in \cA_\tail$ and consider, for $m\in\NN$ and
$r(1),\dots,r(m)\in\NN$, the moment $\ff(x_{r(1)}\cdots x_{r(m)}b)$. By approximating $b$ with
noncommutative polynomials in $\{x_k\mid k>\max(r(1),\dots,r(m))\}$ and using the
exchangeability of the $(x_i)_{i\in\NN}$, one sees that
$$\ff(x_{r(1)}\cdots x_{r(m)}b)=\ff(x_{r(1)}\cdots x_{r(m)} \alpha(b))$$
for all $m\in\NN$, $r(1),\dots,r(m)\in\NN$. But then one also has
$$\ff(a b)=\ff(a \alpha(b))$$
for all $a \in \cA$ and hence $b = \alpha(b)$. Since this is true for any $b \in \cA_\tail$ we
actually have that $\cA_{\alpha}= \cA_\tail$. Thus Proposition $\ref{prop:condexp}$ entails
that the $\varphi$-preserving conditional expectation $E_\alpha$ from $\cA$ onto $\cA_{\alpha}$
exists and equals the conditional expectation $E$ onto the tail algebra.

We recall next that the mean ergodic theorem of von Neumann implies that we have for all
all $a\in\cA$
$$\lim_{m\to\infty}\frac 1m \sum_{i=1}^m \alpha^i(a)= E_\alpha[a]=E[a],$$
in the strong operator topology (see, e.g., \cite{Claus:factorization}).

Now let us consider the situation as in our proposition. By Proposition
\ref{prop:invarianceEclassical} we have exchangeability of $E$ according to
\eqref{eq:invarianceEgeneralclassical}; this means that we have in our situation
$$E\bigl[p_1(x_{i(1)}) \cdots  p_l(x_{i(l)}) \cdots
p_n(x_{i(n)})\bigr]=E\bigl[p_1(x_{i(1)}) \cdots  p_l(x_i) \cdots p_n(x_{i(n)})\bigr]$$
for any $i>N:=\max\{i(1),\dots,i(n)\}.$ But then we also have
\begin{align*}
E\bigl[p_1(x_{i(1)}) &\cdots  p_l(x_{i(l)}) \cdots p_n(x_{i(n)})\bigr]\\&=\frac
1m\sum_{i=N+1}^{N+m} E\bigl[p_1(x_{i(1)}) \cdots  p_l(x_i) \cdots p_n(x_{i(n)})\bigr]\\
&= E\left[p_1(x_{i(1)}) \cdots  \left(\frac 1m\sum_{i=N+1}^{N+m}p_l(x_i)\right) \cdots
p_n(x_{i(n)})\right].
\end{align*}
But
$$ \frac 1m\sum_{i=N+1}^{N+m}p_l(x_i)=\frac 1m \sum_{i=1}^{m}
\alpha^i(p_l(x_N))$$ converges by the mean ergodic theorem to
$E[p_l(x_N)]=E[p_l(x_{i(l)})]$ and thus we get
\begin{multline*}
E\bigl[p_1(x_{i(1)}) \cdots  p_l(x_{i(l)})\cdot \cdots
p_n(x_{i(n)})\bigr]\\=E\bigl[p_1(x_{i(1)}) \cdots  E[p_l(x_{i(l)})]\cdots
p_n(x_{i(n)})\bigr]
\end{multline*}
Note again for the convergence argument that multiplication on norm bounded sets is continuous
in the strong operator topology.
\end{proof}

\section{Invariance under quantum permutations implies\\ freeness over the tail algebra} \label{section:invariance}

We will now provide the proof of the implication `$\ref{item:deFinetti-a}\implies \ref{item:deFinetti-b}$' of Theorem
\ref{thm:deFinetti}. Throughout this section $E$ will denote the conditional expectation
as introduced in Proposition \ref{prop:condexp}.

Let us first address the identical distribution of the $x_i$ with respect to E. For this
we actually need only the classical exchangeability.

\begin{proposition}
Let $(\cA,\ff)$ be a $W^*$-probability space and consider a sequence $(x_i)_{i\in\NN}$ in
$\cA$. Assume that the joint distribution of $(x_i)_{i\in\NN}$ with respect to $\ff$ is
invariant under classical permutations. Then the sequence $(x_i)_{i\in\NN}$ is identically
distributed with respect to the conditional expectation $E$ onto the tail algebra of
$(x_i)_{i\in\NN}$.
\end{proposition}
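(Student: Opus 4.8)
The plan is to show that for every $\cB$-valued polynomial $p \in \cAtX$, the quantity $E[p(x_i)]$ is independent of $i$, where $\cB = \cAt$. Since a general $p$ is a linear combination of monomials $b_0 x_i b_1 x_i \cdots b_{m-1} x_i b_m$ with coefficients $b_0,\dots,b_m \in \cAt$, it suffices by linearity to treat a single such monomial. The natural tool is Proposition \ref{prop:invarianceEclassical}, which transfers classical exchangeability from $\ff$ to $E$: for any permutation $\sigma \in S_k$ and any $\cAt$-valued polynomials we have the invariance \eqref{eq:invarianceEgeneralclassical}.

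First I would fix $i,i' \in \NN$ and a single polynomial $p \in \cAtX$, and observe that both indices can be realized as images of each other under a suitable transposition. Concretely, choose $k \ge \max(i,i')$ and let $\sigma \in S_k$ be the transposition swapping $i$ and $i'$. Applying \eqref{eq:invarianceEgeneralclassical} in the case $n=1$ (a single polynomial, no intervening factors to worry about beyond those inside $p$) gives
\[
E[p(x_i)] = E[p(x_{\sigma(i)})] = E[p(x_{i'})].
\]
Since $i,i'$ were arbitrary, this is exactly the statement that $E[p(x_i)]$ does not depend on $i$, which is the definition of identical distribution with respect to $E$.

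The main subtlety, and the only point requiring care, is the justification that Proposition \ref{prop:invarianceEclassical} legitimately applies with $\cAt$-valued coefficients inside $p$: the transposition $\sigma$ must act only on the single running index and must leave the coefficients $b_0,\dots,b_m \in \cAt$ untouched. This is exactly what \eqref{eq:invarianceEgeneralclassical} provides, since it permutes the indices $i(1),\dots,i(n)$ attached to the $x$'s while the polynomials $p_1,\dots,p_n$ themselves (including their $\cAt$-valued coefficients) are fixed. In our application all $n$ of the $i(m)$ equal the single index $i$, so the transposition sends them uniformly to $i'$, and the bracketed coefficients from $\cAt$ ride along unchanged. Thus there is no genuine obstacle here; the proof is essentially an immediate specialization of the already-established exchangeability of $E$ to the one-variable case, and the whole argument is short. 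The only thing worth emphasizing is that identical distribution is the weakest of the conclusions and does not require quantum exchangeability at all — ordinary classical exchangeability suffices, which is why this proposition is stated separately and proved before the harder freeness argument.
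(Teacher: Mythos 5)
Your proof is correct and takes essentially the same route as the paper: the paper's entire proof of this proposition reads ``This is just a special case of Proposition \ref{prop:invarianceEclassical}'', and your argument simply spells out that specialization by taking $n=1$ in \eqref{eq:invarianceEgeneralclassical} with a transposition swapping $i$ and $i'$. Nothing further is needed.
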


\begin{proof}
This is just a special case of Proposition \ref{prop:invarianceEclassical}.
\end{proof}

Now we will address the freeness property. For this one needs, as in the classical case,
an \emph{infinite} sequence of random variables. One should, however, note that the only
way in which this infinity enters is via the factorization property of Proposition
\ref{prop:factorization} (which relied in the end on the mean ergodic theorem). If this
factorization property is assumed then it is feasible that a more elaborated version of
the following arguments is also applicable to finite sequences of random variables.

We will check that $x_1,x_2,\dots$ are free with respect to $E$ by verifying the defining
relations for freeness. So let us consider $n\in\NN$ and polynomials $p_1,\dots,p_n\in\cAtX$
such that $E[p_i(x_1)]=0$ for all $i=1,\dots,n$. Then we have to show that for all
$i(1)\not=i(2)\not=\dots\not=i(n)$
$$E[p_1(x_{i(1)})\cdots p_n(x_{i(n)})]=0.$$
We will prove this (for fixed $n$ and $p_1,\dots,p_n$) by induction over the number of
blocks of $\ker \ii$, starting from the biggest number and going down in steps of one. To
get started consider the biggest number of blocks, which is $n$. Then all
$i(1),\dots,i(n)$ are different and, by an iterated application of the factorization
property, Proposition \ref{prop:factorization}, we have
$$E[p_1(x_{i(1)})\cdots p_n(x_{i(n)})]=E[p_1(x_{i(1)})]\cdots E[p_n(x_{i(n)})]=0.$$
Now assume, for some $r$, we have proved that $E[p_1(x_{i(1)})\cdots p_n(x_{i(n)})]=0$
whenever $i(1)\not=i(2)\not=\dots\not=i(n)$ and $\ker \ii$ has at least $r+1$ blocks. We
want to show the same for the case that $\ker \ii$ has $r$ blocks.

By Proposition \ref{prop:invarianceE}, we have
\begin{align}\label{eq:inductionstep}
&E[p_1(x_{i(1)})\cdots p_n(x_{i(n)})]\\&=\sum_{j(1),\dots,j(n)=1}^k u_{i(1)j(1)}\cdots
u_{i(n)j(n)}\cdot E[p_1(x_{j(1)})\cdots p_n(x_{j(n)})]\notag\\
&= \sum_{\pi\in\cP(n)}\sum_{\substack{{j(1),\dots,j(n)=1}\\{\ker \jj\;=\pi}}}^k u_{i(1)j(1)}\cdots
u_{i(n)j(n)}\cdot E[p_1(x_{j(1)})\cdots p_n(x_{j(n)})]\notag
\end{align}
Let us first observe that $\jj$'s where two neighboring indices are the same do not
contribute; this follows from the fact that $u_{i(s)j(s)}u_{i(s+1)j(s+1)}=0$ if
$j(s)=j(s+1)$ because $i(s)\not=i(s+1)$. Thus we only have to sum over
$\jj=(j(1),\dots,j(n)$ in the above sum for which $j(1)\not=j(2)\not=\dots\not=j(n)$. But
for those our induction hypothesis applies and thus we see that in the summation over
$\pi\in\cP(n)$ we can restrict to $\pi$ which have at most $r$ blocks. Since
$E[p_1(x_{i(1)})\cdots p_n(x_{i(n)})]$ is invariant under permutations, we can fix
specific (different) $i$-values for the $r$ blocks of $\ker \ii$; let us take
$1,3,5,\dots, 2r-1$ for them.

Let us now choose $k=2r$ and a specific $u=(u_{ij})_{i,j=1}^{2r}$, namely
\begin{equation}\label{eq:matrixu}
u=\begin{pmatrix}
q_1 & 1-q_1& 0&0&\hdots&0&0\\
1-q_1&q_1&0&0&\hdots&0&0\\
0&0&q_2&1-q_2&\hdots&0&0\\
0&0&1-q_2&q_2&\hdots&0&0\\
\vdots&\vdots&\vdots&\vdots&\ddots&\vdots&\vdots\\
0&0&0&0&\hdots &q_r&1-q_r\\
0&0&0&0&\hdots &1-q_r&q_r
\end{pmatrix},
\end{equation}
where $q_1,\dots,q_r$ are arbitrary projections. With this choice of $u$ we have that for
a non-vanishing $u_{ij}$ the $j$-value determines the $i$-value (since we only have the
odd numbers as possible $i$-values), i.e., we have $\ker \jj\leq \ker \ii$; thus in the
sum \eqref{eq:inductionstep} we can restrict to $\pi\leq \ker \ii$. But since we also
restricted to $\pi$ with at most $r$ blocks, we are just left with the one possibility
$\pi=\ker \ii$, i.e., with the above $u$ we can continue \eqref{eq:inductionstep} as
follows:
\begin{align*}
E[p_1(x_{i(1)})&\cdots p_n(x_{i(n)})]\\
&= \sum_{\substack{{j(1),\dots,j(n)=1}\\{\ker \jj\;=\ker \ii}}}^{2r} \hspace{-12pt} u_{i(1)j(1)}\cdots
u_{i(n)j(n)}\cdot E[p_1(x_{j(1)})\cdots p_n(x_{j(n)})]\\
&= \left(\sum_{\substack{{j(1),\dots,j(n)=1}\\{\ker \jj\;=\ker \ii}}}^{2r} \hspace{-12pt} u_{i(1)j(1)}\cdots
u_{i(n)j(n)}\right)\cdot E[p_1(x_{i(1)})\cdots p_n(x_{i(n)})]
\end{align*}
In the last step we have used the fact that, because of the identical distribution of the
$x_i$'s with respect to $E$, the term $E[p_1(x_{j(1)})\cdots p_n(x_{j(n)})]$ depends only
on $\ker \jj$.

If we can show that
\begin{align}\label{eq:bigsum}
\sum_{\substack{{j(1),\dots,j(n)=1}\\{\ker \jj\;=\ker \ii}}}^{2r} \hspace{-12pt} u_{i(1)j(1)}\cdots
u_{i(n)j(n)}
\end{align}
is different from 1, then this implies that $E[p_1(x_{i(1)})\cdots p_n(x_{i(n)})]$ has to
vanish, and we are done.

Note that if $\ker \ii$ is non-crossing then the sum \eqref{eq:bigsum} is actually equal to 1
for any $u$ satisfying the relations of the quantum permutation group. However, if $\ker \ii$
is non-crossing then the condition $i(1)\not=i(2)\not=\dots\not=i(n)$ implies that $\ker \ii$
must have at least one singleton, i.e., one i-index appears only once and then the
factorization property \eqref{eq:factorization} gives right away that $E[p_1(x_{i(1)})\cdots
p_n(x_{i(n)})]=0$. Thus we can restrict to crossing $\ker \ii$ when considering
\eqref{eq:bigsum}.

Note also that if all the $u_{ij}$ in \eqref{eq:bigsum} commute, then we will actually
get 1 (independent of whether $\ker \ii$ is crossing or non-crossing); this shows that
invariance under usual permutations is (clearly) not strong enough to imply freeness. We
have to invoke some real quantum permutations, i.e., we should choose the $q_1,\dots,q_r$
in \eqref{eq:matrixu} as non-commuting. However, it suffices to take just two of them as
non-commuting. Since $\ker \ii$ is crossing we can choose two blocks which have a
crossing. For those two blocks we choose some non-commuting projections $p$ and $q$,
whereas for all the other blocks we choose their projections as 1. Then the sum
\eqref{eq:bigsum} reduces to
$$(pq)^s+(p(1-q))^s+((1-p)q)^s+((1-p)(1-q))^s$$
or to
$$(pq)^sp+(p(1-q))^sp+((1-p)q)^s(1-p)+((1-p)(1-q))^s(1-p)$$
for some $s\geq 2$. It is clear that this is not equal to 1 for generic projections $p$
and $q$. Actually it is fairly easy to see that these expressions are equal to 1 if and
only if $p$ and $q$ commute.

This finishes the proof of the implication `$\ref{item:deFinetti-a}\implies \ref{item:deFinetti-b}$' 
in Theorem \ref{thm:deFinetti}.

\begin{remark}\label{rem:finitedefinetti}
As already mentioned before, our noncommutative de Finetti theorem is not true for a
finite number of random variables. To infer freeness from quantum exchangeability one
needs, as in the classical case, infinitely many variables. To prove that claim we will
in the following present an example, which can be considered as an analogue to classical
urn models without replacement.

Consider the quantum permutation group $A_s(n)$ itself, with defining matrix
$u=(u_{ij})_{i,j=1}^n$. Then, by a fundamental result of Woronowicz \cite{Wor}, there
exists a normalized Haar functional $\psi:A_s(n)\to\CC$. Consider now the $GNS$
representation of $A_s(n)$ with respect to $\psi$; this gives a $W^*$-probability space
$(\cA,\psi)$, where $\cA$ is the weak closure of $A_s(n)$. The defining invariance
property of the Haar functional implies that each column of $u=(u_{ij})_{ij=1}^n$ is
invariant under quantum permutations from $A_s(n)$. To be concrete, let us consider the
first column, $u_{11},\dots,u_{n1}$. These $n$ elements are quantum exchangeable with
respect to $\psi$. However, we claim that there does not exist a conditional expectation
$E$ from $\cA$ onto a von Neumann subalgebra $\cB\subset\cA$ with $\psi\circ E= \psi$,
such that $u_{11},\dots,u_{n1}$ are identically distributed and free with respect to $E$.
Assume the contrary. Since $u_{11}u_{21}=0$, we would have
$$0=E[u_{11} u_{21}]=E[u_{11}] E[u_{21}]=E[u_{11}] E[u_{11}].$$
Since $E[u_{11}]$ is selfadjoint this implies that $E[u_{11}]=0$, and thus also
$$\psi(u_{11})=\psi\bigl(E[u_{11}]\bigr)=0.$$
However, $u_{11}=u_{11}u_{11}^*$ and $\psi$ is faithful, thus $u_{11}=0$, which is a
contradiction.
\end{remark}

\section*{Acknowledgement}
We thank Franz Lehner for some helpful comments on an earlier version of the manuscript.

\end{document}